\newtheorem{lem}{Lemma}[section]
\newtheorem{thm}[lem]{Theorem}
\newtheorem{pro}[lem]{Proposition}
\newtheorem{cor}[lem]{Corollary}
\newtheorem{exa}[lem]{Example}
\newtheorem{con}[lem]{Conjecture}
\newtheorem{defi}[lem]{Definition}
\theoremstyle{definition}
\newtheorem{algo}{Algorithm}
\renewcommand{\thealgo}{\Alph{algo}}
\numberwithin{equation}{section}
\newcommand{\ZZ}{{\mathbb{Z}}}
\newcommand{\A}{{\mathcal{A}}}
\newcommand{\Des}{{\textsf{Des}}}
\newcommand{\Asc}{{\textsf{Asc}}}
\newcommand{\stem}{{\mathsf{stem}}}
\newcommand{\tail}{{\mathsf{tail}}}
\newcommand{\branch}{{\mathsf{branch}}}
\newcommand{\cc}{{\mathrm{c}}}
\newcommand{\MF}{{\mathcal{MF}}}
\newcommand{\CF}{{\mathcal{CF}}}
\newcommand{\TF}{{\mathcal{TF}}}
\newcommand{\LT}{{\mathcal{LT}}}
\newcommand{\QT}{{\mathcal{QT}}}
\newcommand{\SF}{{\mathcal{SF}}}
\newcommand{\OSF}{{\mathcal{OSF}}}
\newcommand{\F}{{\mathcal{F}}}
\newcommand{\T}{{\mathcal{T}}}
\newcommand{\odd}{{\textrm{odd}}}
\newcommand{\even}{{\textrm{even}}}
\title[]{On Ternary Trees and Fighting Fish}
\author[S.-P. Eu]{Sen-Peng Eu}
\address{Department of Mathematics, National Taiwan Normal University, Taipei 116325, and Chinese Air Force Academy, Kaohsiung 820009, Taiwan, ROC}
\email{speu@math.ntnu.edu.tw}
\author[T.-S. Fu]{Tung-Shan Fu}
\address{Department of Applied Mathematics, National Pingtung University, Pingtung 900391, Taiwan, ROC}
\email{tsfu@mail.nptu.edu.tw}
\author[Y.-R. Pan]{Yu-Ren Pan}
\address{Department of Mathematics, National Taiwan Normal University, Taipei 116325, Taiwan, ROC}
\email{yuren.pan.0117@gmail.com}
\begin{document}

\begin{abstract}
Fighting fish is a combinatorial configuration introduced by Duchi, Guerrini, Rinaldi and Schaeffer as a new model of branching surfaces that generalizes directed convex  polyominoes. We come up with an alternative construction of fighting fish, using a tree structure built on the so-called stem cells of fighting fish. From this perspective, we establish a bijection between ternary trees and fighting fish with a marked strip of cells, which specializes to a direct bijection between left ternary trees and fighting fish. Using these results, we obtain a combinatorial enumeration of the fighting fish of size $n$ by establishing an $(n+1)$-to-2 bijection with the ternary trees with $n$ nodes. We present some additional enumerative results including that fighting fish with a marked tail and horizontally symmetric fighting fish are equinumerous with ordered pairs of ternary trees having a total of a given number of nodes.

\end{abstract}

\maketitle

\section{Introduction} \label{sec:Introduction}

In a study of random branching surfaces, Duchi, Guerrini, Rinaldi and Schaeffer \cite{DGRS-A} introduced a combinatorial configuration called fighting fish in terms of gluings of unit squares as a  generalization of directed convex polyominoes. Fighting fish are counted by the sequence \cite[A000139]{oeis} 
\begin{equation*}
\left\{\frac{2}{(n+1)(2n+1)}\binom{3n}{n}\right\}_{n\ge 1} =  1,2,6,22,91,408,1938,9614,\dots,
\end{equation*}
which turns out to be the number of some classical objects such as left ternary trees \cite{DLDRP, JS-1998}, two-stack-sortable permutations \cite{CFH, Fang-2018-2, GW-1996, West-1993, Zeil-1992}, non-separable rooted planar maps \cite{Brown, Brown-Tutte, DH-2022-A, FP-2017} and synchronized intervals of the Tamari lattice \cite{DH-2023, FP-2017}. 

In recent years, there has been growing interest in bijections among these different families of objects. 
Goulden and West \cite{GW-1996} established bijections between two-stack-sortable permutations and non-separable rooted planar maps using either generating trees or recursive decompositions (see also \cite{DGW}). Del Lungo, Del Ristoro and Penaud \cite{DLDRP} used an analogous decomposition for left ternary trees to establish a bijection between non-separable rooted planar maps and left ternary trees, whereas Jacquard and Schaeffer \cite{JS-1998} discovered an alternative bijection.
Fang \cite{Fang-2018-2} used another recursive decomposition of two-stack-sortable permutations to establish a bijection with fighting fish, whereas Cioni, Ferrari and Henriet \cite{CFH} described a direct construction of Fang's bijection using a particular class of trees. Duchi and Henriet obtained a bijection between non-separable rooted planar maps and fighting fish \cite{DH-2022-A} and a bijection between synchronized intervals of the Tamari lattices and fighting fish \cite{DH-2023}. 
See Figure \ref{fig:diagram} for an updated version of a diagram, previously appeared in the article \cite{DH-2022-A}, which summarizes these bijective results (the dashed arrows indicate recursive bijections). 
One of our main results is a direct bijection between left ternary trees and fighting fish (see Theorem \ref{thm:LTT-to-fish-bijection}), contributing a missing link in the diagram.

\begin{figure}[ht]
\begin{center}
%\psfrag{A}[][][0.85]{\cite{DLDRP, JS-1998}}
%\psfrag{B}[][][0.85]{\cite{DH-2023}}
%\psfrag{C}[][][0.85]{\cite{DH-2022-A}}
%\psfrag{D}[][][0.85]{\cite{DGW, GW-1996}}
%\psfrag{E}[][][0.85]{\cite{Fang-2018-1}}
%\psfrag{F}[][][0.85]{\cite{CFH, Fang-2018-2}}

\includegraphics[width=5.2in]{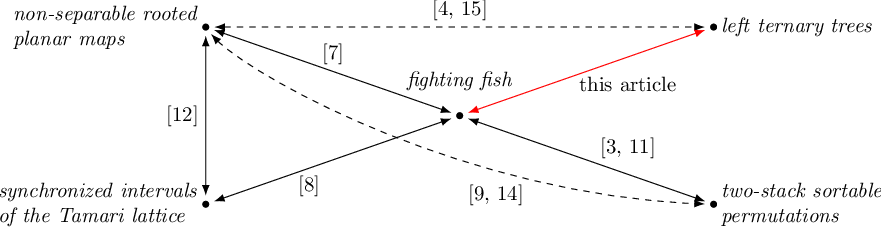}
\end{center}
\caption{\small A diagram of bijective results for the objects in connection to fighting fish.} 
\label{fig:diagram}
\end{figure}

\subsection{Fighting fish}

A \emph{cell} is a 45$^\circ$ tiled unit square the four edges of which are distinguished and called \emph{left upper edge}, \emph{left lower edge}, \emph{right upper edge} and \emph{right lower edge}. An edge of a cell is \emph{free} if it is not glued to the edge of another cell. A \emph{fighting fish} is a finite set of cells glued together edge by edge that can be constructed from an initial cell called \emph{head} by attaching new cells successively. More precisely, there are three ways to add a new cell (see Figure \ref{fig:cell-gluing}).

\setcounter{algo}{1}

\begin{enumerate}[label=(\thealgo\arabic*), ref=(\thealgo\arabic*)]
\item \emph{Upper gluing}: we attach it to a free right upper edge of a cell. \label{enu:A1}
\item \emph{Lower gluing}: we attach it to a free right lower edge of a cell. \label{enu:A2}
\item \emph{Double gluing}: if there is a cell $c$ with two cells $a$ and $b$ attached to its right upper and lower edge, and such that $a$ (resp. $b$) has a free right lower (resp. upper) edge, then we attach the new cell to both $a$ and $b$. \label{enu:A3}
\end{enumerate}

\begin{figure}[ht]
\begin{center}
\psfrag{a}[][][1]{$a$}
\psfrag{b}[][][1]{$b$}
\psfrag{c}[][][1]{$c$}
\includegraphics[width=4.8in]{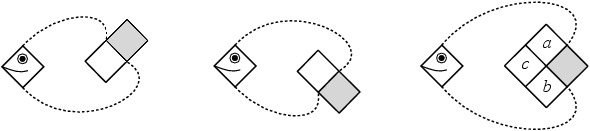}
\end{center}
\caption{\small The three operations to add a cell for growing a fighting fish.} \label{fig:cell-gluing}
\end{figure}

The head of a fighting fish is the only cell with two free left edges. A \emph{tail} is a cell with two free right edges.  
The \emph{size} of a fighting fish is the number of its free right edges minus 1.  Note that every fighting fish has as many free right edges as free left edges. Let $\F_n$ denote the set of fighting fish of size $n$. Using a classical method of generating function, Duchi, Guerrini, Rinaldi and Schaeffer \cite{DGRS-A} derived the number of fighting fish of size $n$, i.e., for $n\ge 1$, 
\begin{equation} \label{eqn:FF_number}
|\F_n| = \frac{2}{(n+1)(2n+1)}\binom{3n}{n}.
\end{equation}

\subsection{Ternary trees}
A \emph{ternary tree} is a finite set of nodes which is either empty, or contains a root and three disjoint ternary trees known as the left, middle and right subtrees of the root.
For each node $x$ of a ternary tree $T$,
there is a path $\mu(x)$ from the root to the node $x$. The path $\mu(x)$ consists of three kinds of steps, left, middle and right. The \emph{abscissa} of $x$ is the difference between the number of 
left and right steps in $\mu(x)$. More precisely, the root has abscissa 0, and if $x$ is an internal node with abscissa $j$, the left, middle and right child of $x$ has abscissa $j+1,j$ and $j-1$, respectively.  By convention, we orient the abscissa axis toward the left. Let $\alpha(x)$ denote the abscissa of $x$. Let $\T_n$ denote the set of ternary trees with $n$ nodes. It is a classical result \cite[A001764]{oeis} that
\begin{equation} \label{eqn:ternary_tree_number}
|\T_n|=\frac{1}{2n+1}\binom{3n}{n}.
\end{equation}
The generating function for $|\T_n|$, $G(x):=\sum_{n\ge 0} |\T_n|x^n = 1+x+3x^2+12x^3+55x^4+273x^5+\cdots$, satisfies the equation $G(x)=1+xG(x)^3$.
A \emph{left ternary tree} is a ternary tree whose nodes all have nonnegative abscissas. Let $\LT_n$ denote the set of left ternary trees with $n$ nodes. 

\subsection{Main results} 
In Section \ref{sec:tree-structure}, we introduce the notion of \emph{stem cells} of fighting fish (see Definition \ref{def:branch-cell}) and a tree structure built on stem cells (see Lemma \ref{lem:TF-is-tree}). In Section \ref{sec:marked-cell}, we devise a vertex-labeling scheme for ternary trees and present an alternative construction of fighting fish, which establishes a bijection that gives an interpretation of the relation 
\begin{equation} \label{eqn:relation}
\sum_{n\ge 1} n|\F_n|x^n = xG(x)^4.
\end{equation}

The \emph{conjugation} of a fighting fish $F$, denoted by $F^{\cc}$, is the fighting fish obtained from $F$ by reflecting with respect to the horizontal axis. For a cell $t$ of $F$, let $t^{\cc}$ denote the cell in $F^{\cc}$ which is the reflection image of the cell $t$. For all $n\ge 1$, we define
\begin{equation} \label{eqn:fish-cells}
\CF_n =\{(F,t)\mid F\in\F_n \mbox{ and $t$ is a stem cell of $F$}\}.
\end{equation}

Let $\T=\cup_{n\ge 0} \T_n$. For all $n\ge 0$, let $(\T\times \T\times \T\times \T)_n$ denote the set of ordered quadruples $(T_1,T_2,T_3,T_4)$ of ternary trees having a total of $n$ nodes. Let $\QT_n$ denote the set of ordered trees with $n$ nodes such that the root has four ternary trees as subtrees (possibly empty). Sometimes we use the notation $(t;T_1,T_2,T_3,T_4)$ for the members of $\QT_n$, where $T_1,T_2,T_3$ and $T_4$ are the subtrees of the root $t$ from left to right and $(T_1,T_2,T_3,T_4)\in(\T\times \T\times \T\times \T)_{n-1}$.

\begin{thm} \label{thm:fighting-fish-marked-stem-cell}
For all $n\ge 1$, there is a bijection $\varphi:\QT_n\rightarrow\CF_n$ such that $(t;T_1,T_2,T_3,T_4)\mapsto (F,t)$ implies $(t;T_2,T_1,T_4,T_3)\mapsto (F^{\cc},t^{\cc})$.
\end{thm}

By a \emph{descending} (resp. \emph{ascending}) \emph{strip} of a fighting fish we mean a maximal sequence $a_1, a_2, \dots, a_m$ of cells such that the left upper (resp. lower) edge of $a_{j+1}$ is glued to the right lower (resp. upper) edge of $a_j$, for some positive integer $m$ and $1\le j\le m-1$. The cell $a_1$ is the \emph{top} (resp. \emph{bottom}) cell of the descending (resp. ascending) strip, and the cell $a_m$ is the bottom (resp. top) cell of the descending (resp. ascending) strip.
In particular, the descending strip containing the head is also called the \emph{jaw} of the fighting fish. 
Note that the descending (resp. ascending) strips of $F$ become the ascending (resp. descending) strips of $F^{\cc}$. 
Define
\begin{equation*}
\MF_n:=\{(F,Q)\mid F\in\F_n \mbox{ and $Q$ is a descending strip of $F$}\}.
\end{equation*}

In Section \ref{sec:marked-strip}, based on the bijection in Theorem \ref{thm:fighting-fish-marked-stem-cell}, we establish the following bijection.

\begin{thm} \label{thm:TT-to-marked-fish} 
For all $n\ge 1$, there is a bijection $\phi: T\mapsto (F,Q)$ of $\T_n$ onto $\MF_n$ such that a ternary tree $T$ having $\ell$ nodes with odd abscissas and $n-\ell$ nodes with even abscissas is carried to an ordered pair $(F,Q)$, where the fighting fish $F$ has $\ell+1$ descending strips and $n-\ell$ ascending strips.
\end{thm}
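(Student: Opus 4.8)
The plan is to build the bijection $\phi$ recursively, mirroring the recursive structure of ternary trees against the alternative construction of fighting fish from Proposition \ref{pro:alt-construction}. A ternary tree $T$ is either empty or decomposes as a root with an ordered triple $(T_L, T_M, T_R)$ of left, middle, and right subtrees; the abscissa bookkeeping says that passing into $T_L$ shifts all abscissas up by one, into $T_R$ down by one, and into $T_M$ leaves them unchanged. On the fighting-fish side, the stem-cell tree structure should give an analogous "peel off one cell and recurse on subconfigurations" decomposition of a pair $(F,Q)$ with a marked descending strip; the three subtrees $T_L, T_M, T_R$ will correspond to three regions of $F$ hanging off the head or off the stem cell picked out by the marked strip $Q$. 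First I would make precise exactly which cell of $(F,Q)$ plays the role of the root, presumably the top cell of the marked descending strip $Q$, and describe how removing it (together with resolving the three gluing types A1, A2, A3 of Algorithm A) splits $F$ into the three smaller marked fighting fish that will be the images of $T_L, T_M, T_R$.

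Next I would verify that $\phi$ is well-defined and bijective by induction on $n$: the inverse map reattaches the root cell to the three recursively-reconstructed sub-fish via the appropriate gluing operation, and one checks the two constructions are mutually inverse at each step. Simultaneously I would track the two statistics. The claim is that a node of odd abscissa contributes a descending strip and a node of even abscissa contributes an ascending strip, with one extra descending strip (the "$+1$" in "$\ell+1$") coming from the jaw, i.e. the descending strip containing the head. I would prove the statistic-tracking by checking it is compatible with the recursive step: when we attach the root cell, whether it extends an existing strip or starts a new strip of the opposite type is governed precisely by the gluing operation used, which in turn is dictated by the abscissa shift $\pm 1$ or $0$. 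The parity flips exactly when we move to $T_L$ or $T_R$ (odd shift) and is preserved when we move to $T_M$ (even shift), which is what forces the descending/ascending alternation to be governed by the parity of the abscissa. Finally, the assertion that "$Q$ is determined by the root of $T$" is essentially built into the construction: the marked descending strip is, by fiat, the one containing the image of the root node, so this is a definitional consequence once the recursion is set up correctly.

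The main obstacle I expect is setting up the base of the recursion and the precise dictionary between the three gluing operations of Algorithm A and the three child slots of a ternary tree — in particular handling the double gluing (A3), which attaches a cell to two existing cells at once and therefore does not correspond to a clean "hang a subtree off one cell" operation. The stem-cell reformulation of Proposition \ref{pro:alt-construction} is presumably designed exactly to linearize this subtlety, so the work is in invoking it correctly: I would need to identify, for each stem cell, the three independent "growth directions" and match them bijectively with left/middle/right, then confirm that the abscissa parity of a node equals (descending-strip membership) of the corresponding cell. A secondary technical point is the boundary/degenerate cases — the empty subtree corresponding to "no further growth in that direction" — and making sure the count of strips comes out to exactly $\ell+1$ and $n-\ell$ rather than being off by one; tracking the jaw as the distinguished seed descending strip should resolve this. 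Once the recursive dictionary is pinned down, both the bijectivity and the statistic preservation follow by a routine induction.
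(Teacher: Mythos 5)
Your plan defers exactly the step that constitutes the theorem: you never specify how a marked fish $(F,Q)$ actually decomposes at its root into three smaller marked fish, and the decomposition you gesture at does not exist in the form you describe. First, the cell that must play the role of the root is the topmost \emph{stem} cell of $Q$, not the top cell of $Q$ (the top cell of $Q$ need not be a stem cell, and $Q$ may contain non-stem cells above the root). More seriously, deleting that one cell does not split $F$ into three sub-configurations matching $T_L,T_M,T_R$: in any correspondence of this kind a single tree node corresponds not to a single cell but to an entire inserted strip of cells (this is what the double gluings force; see operations (C3)--(C4) of Algorithm C), and the strips produced by nodes of the left subtree are glued directly onto the marked strip $Q$ itself, between the root and the material above it (this is the content of Proposition \ref{pro:path}), while $Q$ simultaneously carries the stem cells of the middle chain. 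The three would-be regions are therefore interleaved and share glued edges, so removing one cell, or making any obvious local cut, does not produce three fighting fish each with a canonical marked strip and with sizes adding up correctly. You name this as the ``main obstacle'' and then assume it away (``the work is in invoking it correctly''), but resolving it is the whole theorem; the paper resolves it by abandoning a root recursion altogether: it builds a global tree on \emph{all} stem cells with compass labels $N,E,S,W$ (Algorithm B), proves that the fish is reconstructible from this labelled tree (Algorithm C and Proposition \ref{pro:alt-construction}), and then converts between the four labels and the three child slots of a ternary tree by a parent-dependent rule (Algorithms D and E), obtaining the strip statistics from Lemma \ref{lem:v-labeling-rule} and Lemma \ref{lem:parameters} rather than from an inductive step.

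Two smaller inaccuracies in your bookkeeping point to the same issue. The extra descending strip (the ``$+1$'' in $\ell+1$) is the strip containing the root's stem cell, i.e.\ the marked strip $Q$; it coincides with the jaw only in the left-ternary-tree specialization (Theorem \ref{thm:LTT-to-fish-bijection}), not for a general element of $\MF_n$, where $Q$ may lie far from the head. And the correspondence ``odd abscissa $\leftrightarrow$ new descending strip, even abscissa $\leftrightarrow$ new ascending strip'' is not something you can verify cell-by-cell at a single gluing of Algorithm A, because the node in question creates a whole strip whose length is dictated by cells created by other, distant nodes; it is proved in the paper by tracking which of the operations (C1)--(C4) a label $N,E,S,W$ triggers, together with the parity statement of Lemma \ref{lem:v-labeling-rule}(iv). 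Until you supply a precise, invertible decomposition of $(F,Q)$ compatible with these constraints, the induction you describe has no content.
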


Using (\ref{eqn:ternary_tree_number}) and Theorem \ref{thm:TT-to-marked-fish}, we give a combinatorial proof of (\ref{eqn:FF_number}) by establishing the following bijection.

\begin{thm} \label{thm:2-to-(n+1)}
There is an $(n+1)$-to-$2$ bijection between the set $\F_n$ of fighting fish of size $n$ and the set $\T_n$ of ternary trees with $n$ nodes. Hence we have
\begin{equation} \label{eqn:fish-number}
|\F_n|=\frac{2}{n+1}|\T_n|.
\end{equation}
\end{thm}

In Section \ref{sec:left-ternary}, when the map $\phi$ in Theorem \ref{thm:TT-to-marked-fish} is restricted to left ternary trees, we obtain the following bijection.

\begin{thm} \label{thm:LTT-to-fish-bijection}
For all $n\ge 1$, there is a bijection $\psi: T\mapsto F$ of $\LT_n$ onto $\F_n$ such that a left ternary tree $T$ having $\ell$ nodes with odd abscissas, $n-\ell$ nodes with even abscissas and $k$ nodes with abscissa 0 is carried to a fighting fish $F$ having $\ell+1$ descending strips, $n-\ell$ ascending strips and a jaw with $k$ cells.
\end{thm}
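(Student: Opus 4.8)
The plan is to obtain this result as a specialization of the bijection $\phi:\T_n\to\MF_n$ of Theorem \ref{thm:TT-to-marked-fish}. The key observation is that the marked descending strip $Q$ of a pair $(F,Q)\in\MF_n$ is, by construction, determined by the root of $T$; and the root has abscissa $0$, which is even. So one should expect that when the input $T$ ranges over \emph{left} ternary trees (all abscissas nonnegative) the marked strip $Q$ is forced to be the \emph{jaw} of $F$ — the distinguished descending strip containing the head. If that is the case, then the marking carries no extra information on $\LT_n$, and $\phi$ composed with the forgetful map $(F,Q)\mapsto F$ becomes a genuine bijection onto $\F_n$.

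Concretely, the steps I would carry out are the following. First, recall from the construction underlying Proposition \ref{pro:alt-construction} and Theorem \ref{thm:TT-to-marked-fish} exactly how the descending and ascending strips of $F=\phi(T)$ correspond to the nodes of $T$: a node of odd abscissa contributes a descending strip (together with the ``extra'' jaw strip this gives $\ell+1$ of them) and a node of even abscissa contributes an ascending strip, giving $n-\ell$ of them; this part is inherited verbatim from Theorem \ref{thm:TT-to-marked-fish}, so no new work is needed for the strip count. Second, I would show that $T\in\LT_n$ if and only if the marked strip $Q=\phi(T)$ is the jaw. The natural route is to track, through the stem-cell tree structure, how the abscissa of a node governs the position of the corresponding strip relative to the jaw: nonnegativity of all abscissas should translate into the statement that no strip lies ``below'' the jaw in the relevant sense, which in turn pins $Q$ to be the jaw itself, and conversely. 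Third, I would read off the jaw size: the nodes of $T$ with abscissa exactly $0$ should be precisely the cells added along the jaw of $F$, so a left ternary tree with $k$ such nodes yields a jaw with $k$ cells. Finally, since $Q$ is then the same distinguished strip for every image, forgetting it gives a well-defined bijection $\LT_n\to\F_n$ with the asserted statistics.

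The main obstacle is the second step — proving the equivalence ``$T$ is left $\iff$ $Q$ is the jaw.'' This requires understanding the construction of $\phi$ at the level of individual nodes and abscissas, in particular how a right step (which decreases abscissa) manifests in the growth of $F$ and why producing a negative abscissa is exactly what would force the marked strip off the jaw. One direction (left tree $\Rightarrow$ marked strip is the jaw) should follow by an induction on $n$ that mirrors the recursive decomposition defining $\phi$; the converse amounts to checking that if some node has negative abscissa then the recursive step that attached it moves the distinguished strip $Q$ away from the head, so that $(F,Q)$ is not in the image of $\LT_n$. Tracking the jaw size in the third step is then a bookkeeping refinement of the same induction: each node of abscissa $0$ corresponds to one ``return to the head'' in the construction, hence to one cell of the jaw. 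Once the node-by-node dictionary is in place, all the claimed statistics — $\ell+1$ descending strips, $n-\ell$ ascending strips, jaw of $k$ cells — follow simultaneously, and the theorem is immediate from Theorem \ref{thm:TT-to-marked-fish}.
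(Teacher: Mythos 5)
Your strategy is the same as the paper's: specialize the bijection of Theorem \ref{thm:TT-to-marked-fish}, show that for left ternary trees the marked descending strip is forced to be the jaw (and, conversely, that rooting the stem-cell tree at the topmost stem cell of the jaw of an arbitrary $F\in\F_n$ always yields a left ternary tree), and read the jaw length off the abscissa-$0$ nodes; the strip counts are indeed inherited verbatim from Theorem \ref{thm:TT-to-marked-fish}.

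The gap is that the step you yourself flag as the main obstacle is where essentially all the work lies, and your sketch of it does not carry the load. You propose to prove ``some node has negative abscissa $\Rightarrow$ $Q$ is not the jaw'' by saying the recursive step attaching that node ``moves $Q$ away from the head,'' and to get the jaw length by declaring each abscissa-$0$ node a ``return to the head.'' But the offending node can sit arbitrarily deep in the tree, and where its strip lands in $F$ relative to the jaw is determined by the entire chain of insertions (C3)/(C4) performed above it, not by a single attachment step; an induction on $n$ that mirrors the recursive decomposition is not obviously available, since removing a leaf or a strip changes the geometry in which the later insertions take place. The paper supplies the missing idea as Proposition \ref{pro:path}: for a node $x$ with $\lambda(x)=E$ (resp.\ $N$), every descendant $a$ in the left subtree $L_x$ that returns to the abscissa of $x$ while staying weakly above it satisfies $\lambda(a)=W$ (resp.\ $S$), and, crucially, $\Asc(a)$ (resp.\ $\Des(a)$) intersects $\Des(x)$ (resp.\ $\Asc(x)$) at a cell between $x$ and its parent $P_x$, with $a$ the first stem cell beyond that strip. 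This is proved by induction not on $n$ but on the maximal abscissa excess along the path, using the factorizations (\ref{eqn:factorization-i})--(\ref{eqn:factorization-ii}). Only with this control does the paper conclude that a first descent to abscissa $-1$ would force, via (C4), a descending strip strictly below the root's strip, contradicting jaw-ness (Lemma \ref{lem:T_F-is-LTT}), and only from the same proposition does the correspondence between abscissa-$0$ nodes and jaw cells follow (Lemma \ref{lem:jaw}). Without an argument of this strength, both the equivalence ``$T$ left $\iff$ $Q$ is the jaw'' and the jaw-length claim remain unproved in your proposal, so the plan, while aimed in the right direction, is not yet a proof.
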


We remark that this bijection is established by a direct construction involving statistic insights, which is not isomorphic to the composition of the bijection between non-separable rooted planar maps and fighting fish by Duchi and Henriet \cite{DH-2022-A} and the recursive bijection between left ternary trees and non-separable rooted planar maps by Del~Lungo, Del~Ristoro and Penaud \cite{DLDRP} or by Jacquard and Schaeffer \cite{JS-1998} (see Section \ref{sec:conclusion} for remarks).

\medskip
We present some additional enumerative results derived from Theorems \ref{thm:fighting-fish-marked-stem-cell}, \ref{thm:TT-to-marked-fish} and \ref{thm:LTT-to-fish-bijection}. 
Duchi, Guerrini, Rinaldi and Schaeffer \cite{DGRS-A} derived the total number of tails of all fighting fish in $\F_n$, which coincides with the number of ordered pairs of ternary trees having a total of $n-1$ nodes (introduced by Knuth in his lecture \cite{Knuth-2014}). For $n\ge 0$, let $(\T\times\T)_n$ be the set of ordered pairs $(T_1,T_2)$ of ternary trees having a total of $n$ nodes. It is known that $|(\T\times\T)_n|$ is counted by the sequence \cite[A006013]{oeis}
\begin{equation} \label{eqn:ordered-pair-TT}
\left\{\frac{1}{n+1}\binom{3n+1}{n}\right\}_{n\ge 0} = 1, 2, 7, 30, 143, 728, 3876,\dots.
\end{equation}
For $n\ge 1$, we obtain the following bijective result  (see Theorem \ref{thm:pair-ternary-trees})
\begin{equation}
\{(F,t)\mid F\in\F_n\mbox{ and $t$ is a tail of $F$}\}  \rightarrow (\T\times\T)_{n-1}, \label{eqn:fish-tail-ordered-pair}
\end{equation}
which gives a combinatorial proof of a result by Duchi, Guerrini, Rinaldi and Schaeffer \cite[Corollary 1]{DGRS-A}.

In Section \ref{sec:additional}, we establish a bijection between the following two sets (see Theorem \ref{thm:SFF_enumeration}) to enumerate horizontally symmetric fighting fish ($\SF$).
\begin{equation}
\SF_{2n+1} \rightarrow (\T\times\T)_n.
\end{equation}
Moreover, we obtain a combinatorial enumeration of left ternary trees (see Theorem \ref{thm:enumeration})
\begin{equation}
|\LT_n| = \frac{2}{n+1}|\T_n|,
\end{equation}
which gives a simplified proof of a result by Jacquard and Schaeffer (cf. \cite[Theorem 2]{JS-1998}).

\section{A tree structure on the stem cells of fighting fish} \label{sec:tree-structure}

In this section, we introduce the notion of stem cells of  fighting fish and present a tree structure on the stem cells. First, we have a simple observation about fighting fish.

For some $m\ge 4$, a sequence $c_1,c_2,\dots,c_m$ of cells in a fighting fish is called a \emph{cell circuit} if $c_1$ is attached to $c_m$, and $c_i$ is attached to $c_{i-1}$ for $2\le i\le m$. It is called a \emph{cell cycle} if no cell is repeated.
A cell cycle of a fighting fish is called a \emph{hole} if the edges enclosed within the simple closed curve connecting the centers of consecutive cells along the cycle are free. 

\begin{lem} \label{lem:hole-free}
A fighting fish contains no holes.
\end{lem}

\begin{proof} Suppose a fighting fish $F$ contains a hole $C$, we take one of the rightmost cells, say $c$, of the cell cycle $C$. There exist cells $a,b\in C$ such that $a$ (resp. $b$) is attached to the left upper (resp. lower) edge of $c$. Note that there is no other cell attached to both $a$ and $b$. There are two possible ways to create the cell $c$ in $F$. In one way, the cell $c$ is obtained by a lower gluing on $a$ using \ref{enu:A2}. Then $c$ is not glued to $b$, a contradiction. In the other way, the cell $c$ is obtained by an upper gluing on $b$ using \ref{enu:A1}. Then $c$ is not glued to $a$, also a contradiction.
Thus, $F$ contains no holes.
\end{proof}

\begin{defi} \label{def:branch-cell} {\rm
A cell of a fighting fish is called a \emph{stem cell} if it satisfies one of the following conditions:
\begin{enumerate}
\item at least one of its two right edges is free;
\item it itself has no free right edges, but its right point is the intersection of two free edges. In this case, it is called a \emph{branch cell}.
\end{enumerate}
}
\end{defi}
Note that the bottom (resp. top) cell of each descending (resp. ascending) strip is a stem cell since it has a free right lower (resp. upper) edge. 
For example, the fighting fish shown in Figure \ref{fig:stem-cell} contains 9 stem cells, which are indicated by nodes. Among them, the two cells $w,a$ are branch cells, and the three cells $b,c,d$ are tails.

\begin{figure}[ht]
\begin{center}
\psfrag{a}[][][1]{$a$}
\psfrag{b}[][][1]{$b$}
\psfrag{c}[][][1]{$c$}
\psfrag{d}[][][1]{$d$}
\psfrag{e}[][][1]{$e$}
\psfrag{f}[][][1]{$f$}
\psfrag{g}[][][1]{$g$}
\psfrag{h}[][][1]{$h$}
\psfrag{w}[][][1]{$w$}
\includegraphics[width=1.6in]{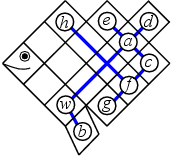}
\end{center}
\caption{\small A fighting fish and the associated graph on its stem cells.} \label{fig:stem-cell}
\end{figure}

\begin{lem} \label{lem:enclosure}
For any cell circuit $C$ of a fighting fish $F$,
if $u\in C$ is a cell such that the right point of $u$ is in the interior of a bounded region of the closed curve connecting the centers of consecutive cells of $C$ then $u$ is not a stem cell in $F$.
\end{lem}

\begin{proof} Consider the closed curve connecting the centers of consecutive cells along the cell circuit $C$. Suppose $u\in C$ is a cell such that the right point of $u$ is enclosed within a bounded region of this curve. By Lemma \ref{lem:hole-free}, the right upper (resp. lower) edge of $u$ is attached to a cell $a$ (resp. $b$) in $F$,  and there is a cell $d$ in $F$ attached to both $a$ and $b$.  It follows that neither the cell $u$ has a free right edge nor the right point of $u$ is the intersection of two free edges. Thus, $u$ is not a stem cell in $F$. The result follows.
\end{proof}

\begin{lem} \label{lem:number-stem-cells}
The following properties hold for every fighting fish $F\in\F_n$.
\begin{enumerate}
\item The number of branch cells in $F$ equals the number of tails minus 1.
\item There are $n$ stem cells in $F$.
\end{enumerate}
\end{lem}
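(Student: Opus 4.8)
The plan is to peel part~(2) off from part~(1) by a short edge count, so that the real work lies in part~(1). Every cell of $F$ has exactly two right edges, each of which is free or glued. Let $t$ be the number of tails (both right edges free), $s$ the number of cells with exactly one free right edge, and $b$ the number of branch cells; a cell with no free right edge that is not a branch cell is not a stem cell, so the stem cells are exactly the $t+s+b$ cells counted by these three pairwise disjoint families. Counting free right edges gives $2t+s=n+1$, since $F$ has size $n$. Hence the number of stem cells is $t+s+b=(n+1)-t+b$, which equals $n$ precisely when $b=t-1$; so part~(2) reduces to part~(1).

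For part~(1) I would induct on the number of cells of $F$. The base case is the one-cell fish (the head), which is itself a tail and has no branch cell, so $0=1-1$. For the inductive step, if $F$ has more than one cell, pick any tail $X$ of $F$: since nothing is glued to the right edges of $X$, moving the addition of $X$ to the end of a construction sequence of $F$ shows that $F':=F\setminus\{X\}$ is again a fighting fish, with one fewer cell and with one fewer free right edge. By the induction hypothesis the number of branch cells of $F'$ equals the number of tails of $F'$ minus $1$, so it suffices to show that re-attaching $X$ changes $(\text{number of tails})-(\text{number of branch cells})$ by~$0$.

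The step I expect to be the main obstacle is controlling the number of branch cells under the three ways $X$ can be attached, and the subtlety is that, because cells of a fighting fish may overlap in the plane, whether a cell $c$ with both right edges glued is a branch cell depends on the cyclic arrangement of \emph{all} cells incident to its right point, not just on the two cells glued along its right edges. So I would first isolate the local fact: if the right upper edge of $c$ is identified with the left lower edge of a cell $a$ and the right lower edge of $c$ with the left upper edge of a cell $b$, then the bottom vertex of $a$, the right point of $c$, and the top vertex of $b$ coincide, and $c$ is a branch cell precisely when at that common vertex there is an ``exterior wedge'' between the wedge of $a$ and the wedge of $b$. Granting this, upper and lower gluings (mirror images of one another) are handled together: attaching $X$ to a free right upper edge of $c$ creates the tail $X$ and costs $c$ a free right edge; if the right lower edge of $c$ was free then $c$ becomes a tail and nothing else changes, while if it was glued then the wedge of $X$ merely replaces the exterior wedge that previously sat just above $c$ at its right point, so $c$ becomes a branch cell exactly when its right point already carried a second free edge---in both cases the quantity is preserved. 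The double gluing case is the most delicate: attaching $X$ to a free pair (right lower edge of $a$, right upper edge of $b$) fills the exterior wedge at the right point of the cell $c$ lying beneath $a$ and $b$, so $c$ stops being a branch cell, while simultaneously $a$ and $b$ each lose a free right edge and therefore each either become a tail (if the remaining right edge was free) or cease to be a branch cell (if it was glued); checking that these several changes always cancel is where the bookkeeping is concentrated.

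A cleaner alternative, which also foreshadows the tree on the stem cells used in the next section, is to exhibit on the set of stem cells a rooted tree in which the leaves are exactly the tails, the branch cells are exactly the nodes with two children, and every remaining stem cell has exactly one child; since a rooted tree satisfies $\#(\text{leaves})=1+\sum_{v}\big(\#\text{children}(v)-1\big)$, this yields $\#(\text{tails})=1+\#(\text{branch cells})$ at once, and part~(2) is then the assertion that this tree has $n$ nodes.
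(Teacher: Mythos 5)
Your reduction of (2) to (1) is exactly the paper's: counting free right edges gives $2t+s=n+1$ and then the stem-cell count $t+s+b=n$ is equivalent to $b=t-1$. Your plan for (1) is also essentially the paper's argument: the paper follows a construction sequence of $F$ and checks that each of (A1)--(A3) preserves (number of tails) $-$ (number of branch cells), while you package the same check as ``delete a tail and re-attach it''; your extra step, that a tail can be pushed to the end of a construction sequence, is correct (gluings only attach the new cell's left edges to free right edges of old cells, so no later operation can involve a cell whose right edges are free in $F$), but it is not needed in the paper's forward formulation.

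The problems are in the case analysis, which is where all the content of (1) sits. First, several of your status changes are reversed as written: when $X$ is glued to the free right upper edge of $c$ and $c$'s right lower edge is also free, $c$ \emph{ceases} to be a tail (it does not ``become'' one), and in the double gluing the cells $a,b$ either cease to be tails or \emph{become} branch cells (they cannot ``cease to be branch cells'', since before the operation each still had a free right edge); taken literally, your bookkeeping does not balance. Second, and more substantively, in the cases where a cell loses its last free right edge you only assert it becomes a branch cell ``exactly when its right point already carried a second free edge'', leaving that condition unverified; if it could fail, the invariant would fail, so this conditional statement is precisely what has to be proved. Third, your framing via ``the cyclic arrangement of \emph{all} cells incident to its right point'' is the wrong notion of incidence: because layers may overlap, cells whose corners merely coincide in the plane need not share a vertex of the glued surface, and counting such spurious free edges would actually falsify the lemma (one can build a fish with two unglued cells over the same square whose planar corner coincides with the right point of a cell all of whose surface-incident edges are glued). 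The way to close the gap --- and what the paper's terse ``we observe'' implicitly relies on --- is to work with vertices of the glued complex: the edges incident to a given surface vertex form a single fan in which consecutive cells are glued, and such a fan is either closed (no free edge at that vertex) or open with exactly two free extreme edges. Then in an upper or lower gluing onto a cell $c$ whose other right edge is glued, the fan at $c$'s right point is open (it contained the formerly free edge), so after the gluing its surviving free end together with the new free right edge of $X$ shows $c$ becomes a branch cell; and in a double gluing the open fan $a$--$c$--$b$ closes up to $a$--$c$--$b$--$X$, so $c$ stops being a branch cell while the free right edges of $X$ keep the fans at the right points of $a$ and $b$ open, so each of $a,b$ either ceases to be a tail or becomes a branch cell. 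Finally, your ``cleaner alternative'' is not available as stated: in the tree the paper builds on the stem cells, a tail need not be a leaf (it can have neighbors on its $W$ and $S$ sides), so a rooted tree with leaves equal to tails and two-child nodes equal to branch cells would have to be constructed from scratch, and doing so amounts to proving (1) anyway.
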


\begin{proof} (i) Let $\tail(F)$ (resp. $\branch(F)$) be the number of tails (resp. branch cells) of $F$. We shall prove the assertion (i) by induction on the number of cells of $F$.

Initially, $F$ consists of a single cell, where $\tail(F)=1$ and $\branch(F)=0$. For $m>1$, suppose the assertion (i) holds for all fighting fish with $m-1$ cells. Let $F'$ be a fighting fish obtained by adding a new cell to a fighting fish $F$ with $m-1$ cells. Suppose the new cell is attached to a cell, say $t$, of $F$ using \ref{enu:A1} or \ref{enu:A2}. Then the cell $t$ has either one or two free right edges in $F$ and the new cell is a tail in $F'$. In the former case, the non-tail cell $t$ becomes a branch cell, and hence $\tail(F')=\tail(F)+1$ and $\branch(F')=\branch(F)+1$. In the latter case, the tail $t$ becomes a non-tail non-branch cell, and hence $\tail(F')=\tail(F)$ and $\branch(F')=\branch(F)$.

Suppose there is a cell $c$ in $F$ with two cells $a$ and $b$ attached to its right upper and lower edge and the new cell is attached to both $a$ and $b$ using \ref{enu:A3}. Then the new cell is a tail and the branch cell $c$ becomes a non-stem cell in $F'$. Moreover, the cell $a$ (resp. $b$) has either one or two free right edges in $F$. For the cell $a$ (resp. $b$), in the former case we turn it from a non-tail cell into a branch cell, and in the latter case from a tail into a non-tail non-branch cell. Hence $\tail(F')=\tail(F)$ and $\branch(F')=\branch(F)$. Thus, $\tail(F')-\branch(F')=\tail(F)-\branch(F)=1$. The result follows from induction.

(ii) Let $\stem(F)$ be the number of stem cells of $F$. Suppose there are $d_0$ (resp. $d_1$ and $d_2$) stem cells in $F$ having 0 (resp. 1 and 2) free right edges. Then $\stem(F)=d_0+d_1+d_2$ and $d_1+2d_2=n+1$. By (i), we have $d_0=d_2-1$. It follows that $\stem(F)=n$. 
\end{proof}

\begin{defi} \label{def:graph T_F} {\rm
For any fighting fish $F$, let $T_F$ be the graph defined as follows. 
\begin{enumerate}
\item The vertex set of $T_F$ is the set of stem cells of $F$.
\item Two vertices $a,b\in T_F$ are adjacent if the stem cells $a,b$ are in the same strip of $F$ and there is no other stem cell in the strip between $a$ and $b$.
\end{enumerate}
}
\end{defi}

The stem cell associated to a vertex $u$ in $T_F$ is also denoted by $u$. See Figure \ref{fig:stem-cell} for an example of the graph $T_F$ built on the stem cells of a fighting fish $F$. Note that the stem cells in the same strip of $F$ form a path in the graph $T_F$.

For any cell $c$ of a fighting fish, let $\Des(c)$ (resp. $\Asc(c)$) denote the descending (resp. ascending) strip containing the cell $c$. For convenience, the \emph{northeast} and \emph{southwest} directions of an ascending strip are denoted by the letters $N$ and $S$, respectively, and the \emph{southeast} and \emph{northwest} directions of a descending strip are denoted by the letters $E$ and $W$, respectively.

\begin{lem} \label{lem:TF-is-tree} For any fighting fish $F$, the graph $T_F$ is a tree.
\end{lem}

\begin{proof} We shall prove that the graph $T_F$ is acyclic and connected.
Suppose the graph $T_F$ contains a cycle, we take one of the leftmost stem cell, say $c$, of the cycle. There exist stem cells $a,b$ in this cycle adjacent to $c$ such that $a$ (resp. $b$) is on the $N$ (resp. $E$) side of $c$ in the strip $\Asc(c)$ (resp. $\Des(c)$) in $F$. Then there is another path $a=x_1, x_2,\dots,x_m=b$ of stem cells from $a$ to $b$ for some $m$ such that $x_i$ and $x_{i-1}$ are in the same strip for $2\le i\le m$. If we draw a closed curve connecting the centers of consecutive cells along the cycle $(c, x_1,\dots, x_m)$, we observe that the right point of $c$ will be enclosed within a bounded region of this curve. By Lemma \ref{lem:enclosure}, the cell $c$ is not a stem cell in $F$, a contradiction. Thus, $T_F$ is acyclic.

We shall prove the connectedness of $T_F$ by induction on the number of cells of $F$. The construction of $F$ starts from a cell and then adds new cells one by one using the operations \ref{enu:A1}--\ref{enu:A3}. Note that if $F$ contains at most three cells then it consists of stem cells only, and hence $T_F$ is connected. For $m>3$, suppose the assertion holds for all fighting fish with $m-1$ cells. Let $F'$ be a fighting fish obtained by adding a new cell to a fighting fish $F$ with $m-1$ cells. Suppose the new cell is attached to a cell, say $t$, of $F$ using \ref{enu:A1} or \ref{enu:A2}. Since the cell $t$ has either one or two free right edges in $F$, it becomes a branch cell or has a free right edge in $F'$. Thus, $t$ remains a stem cell in $F'$. The new cell, say $u$, is also a stem cell in $F'$. Since the node $u$ is adjacent to $t$, the resulting graph $T_{F'}$ remains connected.

Suppose there is a cell $c$ in $F$ with two cells $a$ and $b$ attached to its right upper and lower edge in $F$  and the new cell, say $d$, is attached to both $a$ and $b$ using \ref{enu:A3}. Then the branch cell $c$ becomes a non-stem cell and the cell $d$ is a tail in $F'$.
The resulting graph $T_{F'}$ is obtained from $T_F$ by replacing the vertex $c$ by $d$ and replacing the edge $ac$ (resp. $bc$) by $ad$ (resp. $bd$) (see Figure \ref{fig:connected}(i)). Moreover, if $c$ is connected to a stem cell $w$ (resp. $w'$) in the strip $\Asc(c)$ (resp. $\Des(c)$) or connected to both $w$ and $w'$, then the edge $wc$ (resp. $w'c$) is replaced by the edge $wa$ (resp. $w'b$) (see Figure \ref{fig:connected}(ii)). Thus, $T_F'$ remains connected. The result follows from induction.
\end{proof}

\begin{figure}[ht]
\begin{center}
\psfrag{a}[][][1]{$a$}
\psfrag{b}[][][1]{$b$}
\psfrag{c}[][][1]{$c$}
\psfrag{d}[][][1]{$d$}
\psfrag{w}[][][1]{$w$}
\psfrag{w'}[][][1]{$w'$}
\includegraphics[width=4.2in]{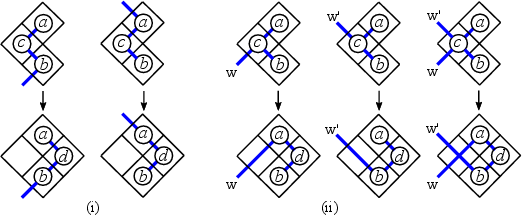}
\end{center}
\caption{\small The varied stem cells by an operation \ref{enu:A3}.} \label{fig:connected}
\end{figure}

\section{On fighting fish with a marked stem cell} \label{sec:marked-cell}

In this section, we shall establish the bijection $\varphi:\QT_n\rightarrow\CF_n$ in Theorem \ref{thm:fighting-fish-marked-stem-cell}.

\subsection{A vertex-labeling representation of ternary trees}
We introduce a representation of ternary trees in terms of a vertex-labeling on rooted trees.

\begin{defi} \label{def:NESW} {\rm
Let $X$ be an arbitrary letter in $\{N,E,S,W\}$.
For any ternary tree $T\in\T_n$, let $\chi_X(T)$ denote the underlying rooted tree of $T$ with a vertex-labeling $\lambda: T \rightarrow \{N, E, S, W\}$ defined by the following rules \ref{enu:B1}--\ref{enu:B5}.
For any internal node $w$ of $T$, let $x,y$ and $z$ be the left, middle and right child of $w$ (if any). 

\setcounter{algo}{2}
\begin{enumerate}[label=(\thealgo\arabic*), ref=(\thealgo\arabic*)]
\item If $w$ is the root,  set $\lambda(w)=X$. \label{enu:B1}
\item If $\lambda(w)=E$,  set $\lambda(x)=N$, $\lambda(y)=E$ and $\lambda(z)=S$. \label{enu:B2}
\item If  $\lambda(w)=N$,  set $\lambda(x)=E$, $\lambda(y)=N$ and $\lambda(z)=W$. \label{enu:B3}
\item If $\lambda(w)=W$,  set $\lambda(x)=N$, $\lambda(y)=W$ and $\lambda(z)=S$. \label{enu:B4}
\item If $\lambda(w)=S$,  set $\lambda(x)=E$, $\lambda(y)=S$ and $\lambda(z)=W$. \label{enu:B5}
\end{enumerate}
For each $X\in\{N,E,S,W\}$, let $\T^X_n:=\{\chi_X(T)\mid T\in\T_n\}$.
}
\end{defi}

For example,  let $X=E$ and take the ternary tree $T$ shown on the left of Figure \ref{fig:ternary-tree}. The corresponding tree $\chi_E(T)$ is shown on the right.
\begin{figure}[ht]
\begin{center}
\psfrag{a}[][][1]{$a$}
\psfrag{b}[][][1]{$b$}
\psfrag{c}[][][1]{$c$}
\psfrag{d}[][][1]{$d$}
\psfrag{e}[][][1]{$e$}
\psfrag{f}[][][1]{$f$}
\psfrag{g}[][][1]{$g$}
\psfrag{h}[][][1]{$h$}
\psfrag{j}[][][1]{$j$}
\psfrag{k}[][][1]{$k$}
\psfrag{w}[][][1]{$w$}
\includegraphics[width=2.8in]{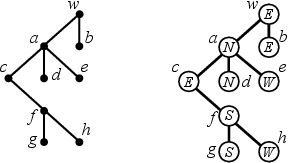}
\end{center}
\caption{\small A ternary tree in $\T_{9}$ and its vertex-labeling representation in $\T^E_{9}$.} \label{fig:ternary-tree}
\end{figure}

\begin{lem} \label{lem:X=NESW}
For all $n\ge 0$, the map $\chi_X:\T_n\rightarrow\T^X_n$ defined in Definition \ref{def:NESW} is a bijection for each $X\in\{N,E,S,W\}$. 
\end{lem}

\begin{proof} Note that the map $\chi_X$ is trivial for $n\le 1$.
For $n\ge 2$ and any tree $T\in\T^X_n$, the corresponding ternary tree ${\chi_X}^{-1}(T)$ is obtained from $T$ by imposing an ordering on sibling nodes defined as follows.
For any internal node $w\in T$ and a child $u$ of $w$, the position of $u$ is determined according to the following four cases.

\setcounter{algo}{3}
\begin{enumerate}[label=(\thealgo\arabic*), ref=(\thealgo\arabic*)]
\item $\lambda(w)=E$. Set $u$ to be the left (resp. middle and right) child of $w$ if $\lambda(u)=N$ (resp. $E$ and $S$). \label{enu:C1}
\item $\lambda(w)=N$. Set $u$ to be the left (resp. middle and right) child of $w$ if $\lambda(u)=E$ (resp. $N$ and $W$). \label{enu:C2}
\item $\lambda(w)=W$. Set $u$ to be the left (resp. middle and right) child of $w$ if $\lambda(u)=N$ (resp. $W$ and $S$). \label{enu:C3}
\item $\lambda(w)=S$. Set $u$ to be the left (resp. middle and right) child of $w$ if $\lambda(u)=E$ (resp. $S$ and $W$). \label{enu:C4}
\end{enumerate}
Note that \ref{enu:C1}--\ref{enu:C4} are exactly the reverse operations of \ref{enu:B2}--\ref{enu:B5}. The result follows.
\end{proof}

\begin{defi} \label{def:underlying-tree} {\rm
For each tree $(t;T_1,T_2,T_3,T_4)\in\QT_n$, we associate it with a labeled rooted tree with $n$ nodes such that the root $t$ has the four subtrees $\chi_N(T_1),\chi_E(T_2),\chi_S(T_3)$ and $\chi_W(T_4)$. The root $t$ is labeled by an arbitrary letter in $\{N,E,S,W\}$.
}
\end{defi}

The following result characterizes the labeled rooted trees associated to the trees in $\QT_n$.

\begin{lem} \label{lem:NESW-representation}
A rooted tree $T$ with $n$ nodes and with a vertex-labeling $\lambda:T\rightarrow\{N,E,S,W\}$ is associated to a tree in $\QT_n$ if and only if $T$ satisfies the following conditions.
\begin{enumerate}
\item The root of $T$ has at most four children, and the children of the root have distinct labels.
\item For any non-root node $u\in T$, the node $u$ has at most three children, and the children of $u$ have distinct labels, which are all different from the opposite directional letter of $\lambda(u)$.
\end{enumerate}
\end{lem}

\begin{proof} Let $T$ be a tree associated to a tree in $\QT_n$. By Definition \ref{def:underlying-tree}, $T$ satisfies condition (i). By the rules \ref{enu:B1}--\ref{enu:B5},  $T$ satisfies condition (ii).

On the other hand, let $T$ be a rooted tree with $n$ nodes and with a vertex-labeling $\lambda: T\rightarrow\{N,E,S,W\}$ satisfying conditions (i) and (ii). By condition (i), let $w$, $x$, $y$ and $z$ be the children of the root $t$ with label $N$, $E$, $S$ and $W$, respectively (if any). Let $T_w$ (resp. $T_x$, $T_y$ and $T_z$) be the subtree of $T$ rooted at $w$ (resp. $x$, $y$ and $z$). By conditions (ii), we observe that each of $T_w$, $T_x$, $T_y$ and $T_z$ can be converted into a ternary tree using \ref{enu:C1}--\ref{enu:C4} in the proof of Lemma \ref{lem:X=NESW}. Thus, $T$ is associated to the ordered tree $(t;T_1,T_2,T_3,T_4)\in\QT_n$, where $T_1={\chi_N}^{-1}(T_w)$, $T_2={\chi_E}^{-1}(T_x)$, $T_3={\chi_S}^{-1}(T_y)$ and $T_4={\chi_W}^{-1}(T_z)$. The result follows.
\end{proof}

\subsection{A map $\varphi:\QT_n\rightarrow\CF_n$}

\begin{defi} {\rm
Let $T$ be a rooted tree with $n$ nodes. By a \emph{chain} $T_1\subset T_2\subset\cdots\subset T_n$ \emph{of subtrees} of $T$, we mean $T_n=T$ and $T_i$ is obtained from $T_{i+1}$ by removing a leaf, for all $i<n$.
}
\end{defi}

We describe a map $\varphi:\QT_n\rightarrow\CF_n$. Given a tree in $\QT_n$, let $T$ be the associated labeled rooted tree. We shall construct a fighting fish from $T$ using the following Algorithm D. We remark that the label of the root is irrelevant for the construction.

\begin{algo} \label{algo:tree-to-fish}
\leavevmode

Choose a chain $T_1\subset T_2\subset\cdots\subset T_n$ of subtrees of $T$. Starting from $T_1$, we construct a union of cells from  $T_i$, denoted by $U_i$,  for $i=1,2,\dots,n$ as follows.

Let $U_1$ be a single cell with the label of the root. For $i>1$, suppose $U_{i-1}$ has been constructed. Let $u$ be the node in $T_i\setminus T_{i-1}$. We shall construct $U_i$ from $U_{i-1}$ by adding cells, so that the cell associated to $u$ is created in $U_i$. Let $w$ be the parent of $u$ in $T$. (We shall see in Lemma \ref{lem:cell-union}(i) that the cell $w$ is a stem cell in the fighting fish $U_{i-1}$). There are four possible ways to add cells according to $\lambda(u)$.

\begin{enumerate}[label=(\thealgo\arabic*), ref=(\thealgo\arabic*)]
\item $\lambda(u)=N$. Then the right upper edge of $w$ is free in $U_{i-1}$ (see Lemma \ref{lem:cell-union}(i)). We attach the new cell $u$ to the right upper edge of $w$. \label{enu:D1}
\item $\lambda(u)=E$.  Then the right lower edge of $w$ is free (see Lemma \ref{lem:cell-union}(i)). We attach the new cell $u$ to the right lower edge of $w$. \label{enu:D2}
\item $\lambda(u)=W$.  Then there is no stem cell on the $W$ side of $w$ in $U_{i-1}$ (see Lemma \ref{lem:cell-union}(i)).
Let $q$ be the top cell of the strip $\Des(w)$, and let $p$ be the bottom cell of the strip $\Asc(q)$ (see Figure \ref{fig:upper-insertion}(i)). 
We insert an ascending strip $R$ on the immediate left of $\Asc(q)$ by duplicating the cells from $p$ to $q$. (see Figure \ref{fig:upper-insertion}(ii)). In this case, the top cell of $R$ is the new cell associated to the node $u$. \label{enu:D3}
\item $\lambda(u)=S$.  Then there is no stem cell on the $S$ side of $w$ in $U_{i-1}$ (see Lemma \ref{lem:cell-union}(i)). Let $q$ be the bottom cell of $\Asc(w)$, and $p$ be the top cell of $\Des(q)$ (see Figure \ref{fig:lower-insertion}(i)). \label{enu:D4}
We insert a descending strip $R$ on the immediate left of $\Des(q)$ by duplicating the cells from $p$ to $q$. (see Figure \ref{fig:lower-insertion}(ii)). The bottom cell of $R$ is the new cell associated to $u$.
\end{enumerate}
In \ref{enu:D1}--\ref{enu:D4}, the cell $u$ is added to the $\lambda(u)$ side of $w$. So, the cell $u$ carries the label $\lambda(u)$.
\end{algo}

\begin{figure}[ht]
\begin{center}
\psfrag{w}[][][1]{$w$}
\psfrag{q}[][][1]{$q$}
\psfrag{p}[][][1]{$p$}
\psfrag{b}[][][1]{$b_1$}
\psfrag{c}[][][1]{$c_1$}
\psfrag{d}[][][1]{$b_k$}
\psfrag{f}[][][1]{$c_k$}

\includegraphics[width=5.0in]{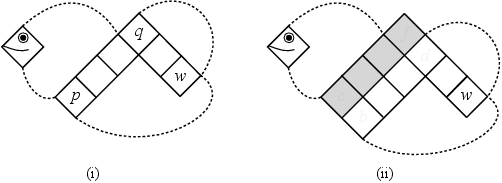}
\end{center}
\caption{\small The operation \ref{enu:D3} for adding an ascending strip of cells $c_1,\dots,c_k$.} \label{fig:upper-insertion}
\end{figure}

\begin{figure}[ht]
\begin{center}
\psfrag{w}[][][1]{$w$}
\psfrag{q}[][][1]{$q$}
\psfrag{p}[][][1]{$p$}
\psfrag{b}[][][1]{$b_1$}
\psfrag{c}[][][1]{$c_1$}
\psfrag{d}[][][1]{$b_k$}
\psfrag{f}[][][1]{$c_k$}

\includegraphics[width=5.0in]{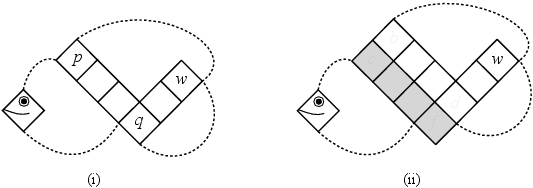}
\end{center}
\caption{\small The operation \ref{enu:D4} for adding a descending strip of cells $c_1,\dots,c_k$.} \label{fig:lower-insertion}
\end{figure}

\begin{exa} \label{exa:stem-cell-construction} {\rm
Consider the labeled rooted tree $T$ shown in Figure \ref{fig:ternary-tree}. Using Algorithm \ref{algo:tree-to-fish}, we show in Figure \ref{fig:fish-construction} a chain of subtrees of $T$ and the union of cells constructed from each subtree.
}
\end{exa}

\begin{figure}[ht]
\begin{center}
\psfrag{a}[][][1]{$a$}
\psfrag{b}[][][1]{$b$}
\psfrag{c}[][][1]{$c$}
\psfrag{d}[][][1]{$d$}
\psfrag{e}[][][1]{$e$}
\psfrag{f}[][][1]{$f$}
\psfrag{g}[][][1]{$g$}
\psfrag{h}[][][1]{$h$}
\includegraphics[width=5.4in]{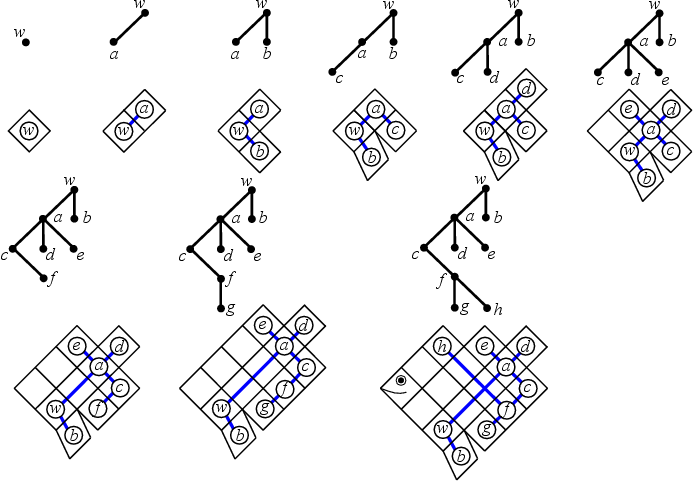}
\end{center}
\caption{\small The construction of the fighting fish from the ternary tree in Figure \ref{fig:ternary-tree}.} \label{fig:fish-construction}
\end{figure}

\begin{lem} \label{lem:union-cell-fish}
Given a tree in $\QT_n$, let $T$ be the associated labeled rooted tree. For any chain $T_1\subset \cdots\subset T_n$ of subtrees of $T$, let $U_j$ be the union of cells constructed from $T_j$ using Algorithm \ref{algo:tree-to-fish}, for $1\le j\le n$. Then each $U_j$ is a fighting fish.
\end{lem}

\begin{proof} We shall prove the assertion by induction. For $j=1$, $U_1$ is a fighting fish with a single cell. For $j>1$, suppose $U_{j-1}$ is a fighting fish. To prove that $U_j$ is a fighting fish, it suffices to prove that $U_j$ can be constructed properly using \ref{enu:A1}--\ref{enu:A3}. Let $u=T_j\setminus T_{j-1}$ and let $w$ be the parent of $u$.

If $\lambda(u)=N$ (resp. $E$) then by \ref{enu:D1}--\ref{enu:D2} $U_j$ is obtained from $U_{j-1}$ by an upper (resp. lower) gluing of $u$ on $w$. Thus, $U_j$ can be constructed using \ref{enu:A1}--\ref{enu:A3}. 

Suppose $\lambda(u)=W$. In $U_{j-1}$, let $q$ be the top cell of the strip $\Des(w)$, and let $p$ be the bottom cell of $\Asc(q)$ (see Figure \ref{fig:upper-insertion}(i)). For some integer $k$, let $p=b_1, b_2,\dots, b_k=q$ be the consecutive cells from $p$ to $q$ in the strip $\Asc(q)$. By \ref{enu:D3}, $U_j$ is obtained from $U_{j-1}$ by inserting an ascending strip of $k$ cells on the immediate left of $\Asc(q)$, which are duplicates of the cells from $p$ to $q$. This operation can be accomplished using \ref{enu:A1}--\ref{enu:A3} as follows.

During the construction of $U_{j-1}$, since $b_1$ is the bottom cell of $\Asc(q)$, the cell $b_1$ either is the head or is created by a lower gluing \ref{enu:A2}. In the former case, $U_j$ is obtained by setting a cell $c_1$ to be the head and perform a lower gluing of $b_1$ on $c_1$. In the latter case, instead of gluing $b_1$, we glue first $c_1$ and then $b_1$. For $i\ge 2$, instead of gluing $b_i$ on $b_{i-1}$, we glue first $c_i$ on $c_{i-1}$ and then perform a double gluing of $b_i$ on $c_i$ and $b_{i-1}$ (see Figure \ref{fig:upper-insertion}(ii)).
Thus, $U_j$ can be constructed using \ref{enu:A1}--\ref{enu:A3}. A symmetric argument applies to the case $\lambda(u)=S$. The assertion follows.
\end{proof}

\begin{lem} \label{lem:cell-union}
Given a tree in $\QT_n$, let $T$ be the associated labeled rooted tree. For any chain $T_1\subset \cdots\subset T_n$ of subtrees of $T$, let $U_j$ be the fighting fish constructed from $T_j$ using Algorithm \ref{algo:tree-to-fish}, for $1\le j\le n$. Then $U_1$ consists of a single cell and the following results hold.
\begin{enumerate}
\item For each $j>1$, let $u$ denote the node in $T_j\setminus T_{j-1}$, and let $w$ denote the parent of $u$. Then the cell $w$ is a stem cell in $U_{j-1}$. If $\lambda(u)=N$ (resp. $E$) then the right upper (resp. lower) edge of $w$ is free in $U_{j-1}$. If $\lambda(u)=W$ (resp. $S$) then there is no stem cell on the $W$ (resp. $S$) side of $w$ in $U_{j-1}$.
\item For each $j>1$, the stem cells of $U_j$ consist of the cells associated to the nodes of $T_j$ and carry the same labeling as in $T_j$.
\end{enumerate}
\end{lem}

\begin{proof} (i) By Algorithm \ref{algo:tree-to-fish}, $U_1$ is a single cell, denoted by $t$, with the label of the root. We shall prove the assertions (i) and (ii) by induction. For the initial case $j=2$, let $u=T_2\setminus T_1$.  Note that the cell $t$ is a stem cell in $U_1$ (with four free edges). If $\lambda(u)=N$ (resp. $E$) then by \ref{enu:D1}--\ref{enu:D2} the cell $u$ is attached to the right upper (resp. lower) edge of $t$. If $\lambda(u)=W$ (resp. $S$) then by \ref{enu:D3}--\ref{enu:D4} the cell $u$ is attached to the left upper (resp. lower) edge of $t$. Thus, $U_2$ consists of two stem cells with the same labeling as in $T_2$.  For $j>2$, suppose the assertions (i) and (ii) hold for the case $j-1$. Let $u=T_j\setminus T_{j-1}$ and let $w$ be the parent of $u$. By induction hypothesis, the cell $w$ is a stem cell in $U_{j-1}$. 

Suppose $\lambda(u)=N$. Suppose the right upper edge of $w$ is not free in $U_{j-1}$, there is a stem cell, say $q$, connected to $w$ on the $N$ side of $w$. If $q$ is another child of $w$ then the cell $q$ is created by an operation \ref{enu:D1} on $w$ during the construction of $U_{j-1}$, and hence $\lambda(q)=N$. By Lemma \ref{lem:NESW-representation}, this contradicts the condition $\lambda(q)\neq\lambda(u)$. Otherwise, $w$ is a child of $q$, and hence $w$ is a non-root node. Since the cell $w$ is on the $S$ side of $q$, it is created by an operation \ref{enu:D4} on $q$, and hence $\lambda(w)=S$. By Lemma \ref{lem:NESW-representation}(ii),  this contradicts the condition that $\lambda(u)$ is different from the opposite of $\lambda(w)$. Thus, the right upper edge of $w$ is free in $U_{j-1}$. 

By \ref{enu:D1}, $U_j$ is obtained from $U_{j-1}$ by attaching $u$ to the right upper edge of $w$. Note that $w$ is the only stem cell in $U_{j-1}$ affected by this operation. If $w$ is a tail of $U_{j-1}$ then the right lower edge of $w$ is free in $U_j$; otherwise, $w$ becomes a branch cell. Thus, $w$ is still a stem cell in $U_j$, and hence all of the stem cells of $U_{j-1}$ remain stem cells in $U_j$. A symmetric argument applies to the case $\lambda(u)=E$. 

The assertion that if $\lambda(u)=W$ (resp. $S$) then there is no stem cell on the $W$ (resp. $S$) side of $w$ in $U_{j-1}$ can also be proved by a similar argument. By \ref{enu:D3} (resp. \ref{enu:D4}), $U_j$ is obtained from $U_{j-1}$ by inserting an ascending (resp. descending) strip, so that the cell $u$ is created on the $W$ (resp. $S$) side of $w$. Note that the free right edges of $U_{j-1}$ remain unchanged, and hence all of the stem cells of $U_{j-1}$ remain stem cells in $U_j$. The assertions (i) and (ii) follow from induction.
\end{proof}

\begin{lem} \label{lem:leaf-cells} 
Given a tree in $\QT_n$, let $T$ be the associated labeled rooted tree. For any chain $T_1\subset\cdots\subset T_n$ of subtrees of $T$, let $F_j$ be the fighting fish constructed from $T_j$ using Algorithm \ref{algo:tree-to-fish}. 
For each $T_j$ and $j>1$, if $u$ is a leaf in $T_j$ then the following properties of $F_j$ hold.
\begin{enumerate}
\item If $\lambda(u)=N$ then the cell $u$ is the top cell of the strip $\Asc(u)$, and the strip $\Des(u)$ consists of the cell $u$ itself in $F_j$.
\item If $\lambda(u)=E$ then the cell $u$ is the bottom cell of the strip $\Des(u)$, and the strip $\Asc(u)$ consists of the cell $u$ itself in $F_j$.
\item If $\lambda(u)=W$ then the cell $u$ is the top cell of both $\Des(u)$ and $\Asc(u)$, and it is also the unique stem cell of $\Asc(u)$ in $F_j$. 
\item If $\lambda(u)=S$ then the cell $u$ is the bottom cell of both $\Asc(u)$ and $\Des(u)$, and it is also the unique stem cell of $\Des(u)$ in $F_j$.
\end{enumerate}
\end{lem}
 
\begin{proof} For an integer $j>1$, suppose $u$ is a leaf in $T_j$, let $w$ be the parent of $u$. Consider the label of $u$.

1.
Suppose $\lambda(u)=N$. By \ref{enu:D1}, the cell $u$ is attached to the right upper edge of $w$ during the construction of $F_j$. Since $u$ is a leaf of $T_j$, the cell $u$ is the unique stem cell on the $N$ side of $w$ in   $F_j$, and hence $u$ is the top cell of $\Asc(u)$. We claim that the strip $\Des(u)$ consists of the cell $u$ itself in $F_j$.

Suppose there are other cells on the $E$ side of $u$ in  $F_j$. Since the bottom cell of $\Des(u)$ is a stem cell, there is at least one stem cell on the $E$ side of $u$. Then the cell $u$ is connected to another stem cell of $\Des(u)$ in $F_j$, which is against the condition that $u$ is a leaf in $T_j$.

Suppose there are other cells on the $W$ side of $u$ in $F_j$, let $v$ be the cell attached to the left upper edge of $u$. We observe that either the right upper edge of $v$ is free, or the right point of $v$ is the intersection of two free edges. Thus, $v$ is a stem cell connected to $u$ in $F_j$, which is against the same condition. 
Thus, the strip $\Des(u)$ consists of the cell $u$ itself in $F_j$.  A symmetric argument applies to the case $\lambda(u)=E$. The assertions (i) and (ii) follow.

2.
Suppose $\lambda(u)=W$. By \ref{enu:D3}, the cell $u$ is created on the $W$ side of $w$. Since $u$ is a leaf in $T_j$, the cell $u$ is the unique stem cell on the $W$ side of $w$ in the strip $\Des(w)$ of $F_j$, and hence $u$ is the top cell of $\Des(u)$. We claim that $u$ is the top cell of $\Asc(u)$ and that it is also the unique stem cell of $\Asc(u)$ in $F_j$.

Suppose there are other cells on the $N$ side of $u$ in $F_j$. Since the top cell of $\Asc(u)$ is a stem cell, there is at least one stem cell on the $N$ side of $u$. Then the cell $u$ is connected to another stem cell of $\Asc(u)$ in $F_j$, a contradiction. Thus, $u$ is the top cell of $\Asc(u)$ in $F_j$. Note that there are possibly other cells on the $S$ side of $u$, but if there exists any stem cell among them then the cell $u$ is connected to anther stem cell of $\Asc(u)$ in $F_j$, also a contradiction. Thus, $u$ is the unique stem cell of $\Asc(u)$ in $F_j$. A symmetric argument applies to the case $\lambda(u)=S$. The assertions (iii) and (iv) follow.
\end{proof}

The following result shows that during the construction of a fighting fish the exchange of two successive but independent operations does not change the outcome.

\begin{lem} \label{lem:exchange-operation} 
Given a tree in $\QT_n$ with at least two leaves, let $T$ be the associated labeled rooted tree.  Let $\tau:T_1\subset \cdots\subset T_n$ be a chain of subtrees of $T$. For some $i>1$, let $u=T_i\setminus T_{i-1}$ and $v = T_{i+1}\setminus T_i$ such that $v$ is not a child of $u$. Let $\tau'$ be the chain of subtrees of $T$ obtained from $\tau$ by replacing $T_i$ by $(T_i\setminus\{u\})\cup\{v\}$. Let $F, F'$ be the fighting fish constructed from $\tau,\tau'$, respectively using Algorithm \ref{algo:tree-to-fish}. Then $F=F'$.
\end{lem}

\begin{proof}
For the chain $\tau:T_1\subset\cdots\subset T_n$, let $F_j$ be the fighting fish constructed from $T_j$. For the chain $\tau'$, let $\tau'$ be denoted as $\tau':T'_1\subset\cdots\subset T'_n$, where
\begin{equation}
T'_j = \begin{cases}
(T_j\setminus\{u\})\cup\{v\} & \mbox{if $j=i$;} \\
T_j & \mbox{otherwise,}
\end{cases}
\end{equation}
and let $F'_j$ be the fighting fish constructed from $T'_j$. Since $T'_j=T_j$ for all $j\le i-1$, we have $F'_{i-1}=F_{i-1}$.  We claim that $F'_{i+1}=F_{i+1}$.

Since $v$ is not a child of $u$, the node $u$ is a leaf in $T_{i+1}$. Let $w$ be the parent of $u$. Consider the label of $u$.

1. Suppose $\lambda(u)=N$. By Lemma \ref{lem:leaf-cells}(i), the cell $u$ is the top cell of $\Asc(u)$, and the strip $\Des(u)$ consists of the cell $u$ itself in $F_{i+1}$. Note that $T'_i=T_{i+1}\setminus\{u\}$. It follows that $F'_i$ is exactly the fighting fish obtained from $F_{i+1}$ by removing the cell $u$. Since $T'_{i+1}=T_{i+1}=T'_i\cup\{u\}$, by \ref{enu:D1} $F'_{i+1}$ is obtained from $F'_i$ by attaching the cell $u$ to the right upper edge of $w$. Thus, $F'_{i+1}=F_{i+1}$. A symmetric argument applies to the case $\lambda(u)=E$.

2. Suppose $\lambda(u)=W$. By Lemma \ref{lem:leaf-cells}(iii), the cell $u$ is the top cell of both $\Des(u)$ and $\Asc(u)$, and it is also the unique stem cell of $\Asc(u)$ in $F_{i+1}$. Since $T'_i=T_{i+1}\setminus\{u\}$, $F'_i$ is exactly the fighting fish obtained from $F_{i+1}$ by removing the strip $\Asc(u)$. Since $T'_{i+1}=T'_i\cup\{u\}$, by \ref{enu:D3} $F'_{i+1}$ is obtained from $F'_i$ by inserting an ascending strip, so that the cell $u$ is added to the $W$ side of $w$. Thus, $F'_{i+1}=F_{i+1}$. A symmetric argument applies to the case $\lambda(u)=S$.

Since $T'_j=T_j$ for all $j\ge i+1$, we have $F'_n=F_n$. The result follows.
\end{proof}

The following result shows that a ternary tree determines a unique fighting fish, independent of the choice of chain of subtrees.

\begin{pro} \label{pro:isomorphic}
Given a tree in $\QT_n$, let $T$ be the associated labeled rooted tree. For any two chains $\tau:T_1\subset\cdots\subset T_n$ and $\tau':T'_1\subset\cdots\subset T'_n$ of subtrees of $T$, let $F, F'\in\F_n$ be the fighting fish constructed from $\tau,\tau'$, respectively using Algorithm $D$. Then $F=F'$.
\end{pro}

\begin{proof} If $T$ has only one leaf then we have $\tau=\tau'$ and hence $F=F'$. Suppose $T$ has at lease two leaves.
We claim that there is a sequence of chains $\tau'=\tau_1,\tau_2,\dots,\tau_m=\tau$ of subtrees of $T$ for some $m$ such that $\tau_{i+1}$ is obtained from $\tau_{i}$ by a serial exchanges of successive but independent operations, so that the constructions from $\tau_{i+1}$ and $\tau_i$ have the same outcome.

We prove the claim by induction on the least index $j$ such that $T'_j\neq T_j$.
Note that $T'_1=T_1$, consisting of the root of $T$. Let $\ell$ be the least integer such that $T'_{\ell}\neq T_{\ell}$ for some $\ell>1$. 
Let $v=T_{\ell}\setminus T_{\ell-1}$ and $q=T'_{\ell}\setminus T'_{\ell-1}$. 
Suppose $v=T'_k\setminus T'_{k-1}$ for some $k>\ell$, let $q=u_0, u_1,\dots, u_{k-\ell}=v$ be the nodes such that $u_j=T'_{\ell+j}\setminus T'_{\ell+j-1}$ for $0\le j\le k-\ell$. Note that $v$ is not a child of each $u_j$ for $j< k-\ell$ since the parent of $v$ is in $T_{\ell-1}=T'_{\ell-1}$.  We construct a chain $\tau'':T''_1\subset\cdots\subset T''_n$ from $\tau'$ by setting 
\begin{equation}
T''_j =\begin{cases}
(T'_j\setminus\{u_{j-\ell}\})\cup\{v\} & \mbox{if $\ell\le j\le k-1$;} \\
T'_j & \mbox{otherwise.}
\end{cases}
\end{equation}
That is, $T''_{k-1}=T'_k\setminus\{u_{k-1-\ell}\}$, which is obtained by exchanging $v$ for $u_{k-1-\ell}$ in $T'_{k-1}$, and then $T''_j=T''_{j+1}\setminus\{u_{j-\ell}\}$, which is obtained by exchanging  $v$ for $u_{j-\ell}$ in $T'_{j}$ for $\ell\le j\le k-2$. 
By Lemma \ref{lem:exchange-operation}, the constructions from $\tau',\tau''$ have the same outcome. Note that $T''_{\ell}=(T'_{\ell}\setminus\{q\})\cup\{v\}=T_{\ell}$ and $T''_j=T_j$ for all $j< \ell$.
By induction, the announced sequence $\tau'=\tau_1,\tau_2,\dots,\tau_m=\tau$ of chains can be obtained. The result follows.
\end{proof}

\subsection{A map $\varphi':\CF_n\rightarrow\QT_n$}
We describe a map $\varphi':\CF_n\rightarrow\QT_n$. Given a pair $(F, t)\in\CF_n$, let $T_{(F,t)}$ denote the labeled rooted tree on the stem cells of $F$ obtained by specifying the cell $t$ to be the root and a vertex-labeling $\lambda:T_{(F,t)}\rightarrow\{N,E,S,W\}$ as follows. 

\begin{algo} \label{algo:fish-to-tree}
\leavevmode

Let $X$ be an arbitrary letter in $\{N,E,S,W\}$.
\begin{enumerate}[label=(\thealgo\arabic*), ref=(\thealgo\arabic*)]
\item Set $\lambda(t)=X$. \label{enu:E1}
\item For any labeled node $u$, if $v$ is an unlabeled node adjacent to $u$ on the $N$ (resp. $E$, $S$ and $W$) side of $u$, we set $v$ to be a child of $u$ with $\lambda(v)=N$ (resp. $E$, $S$ and $W$). \label{enu:E2}
\end{enumerate}
In this case, the cell $u$ is called the \emph{parent cell} of $v$, and the cell $t$ is called the \emph{root cell}.
\end{algo}

\begin{exa} {\rm
Consider the fighting fish $F$ shown in Figure \ref{fig:fish-to-tree}(i), along with the tree on its stem cells. If the cell $w$ is set to be the root with the choice $\lambda(w)=E$, the labeled rooted tree $T_{(F,w)}$ is shown in Figure \ref{fig:fish-to-tree}(ii). (The ternary tree formation of $T_{(F,w)}$ is shown in Figure \ref{fig:ternary-tree}.)
}
\end{exa}

\begin{figure}[ht]
\begin{center}

\psfrag{a}[][][1]{$a$}
\psfrag{b}[][][1]{$b$}
\psfrag{c}[][][1]{$c$}
\psfrag{d}[][][1]{$d$}
\psfrag{e}[][][1]{$e$}
\psfrag{f}[][][1]{$f$}
\psfrag{g}[][][1]{$g$}
\psfrag{h}[][][1]{$h$}
\psfrag{w}[][][1]{$w$}
\includegraphics[width=3.8in]{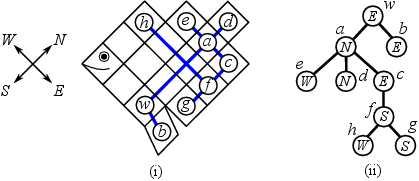}
\end{center}
\caption{\small A fighting fish and the associated labeled rooted tree.} \label{fig:fish-to-tree}
\end{figure}

\begin{lem} \label{lem:NESW-fish-tree}
For any pair $(F, t)\in\CF_n$, let $T_{(F,t)}$ be the labeled rooted tree on the stem cells of $F$ obtained using Algorithm \ref{algo:fish-to-tree}. Then the following properties hold.
\begin{enumerate}
\item The root of $T_{(F,t)}$ has at most four children, and the children of the root have distinct labels.
\item For any non-root node $u\in T_{(F,t)}$, the node $u$ has at most three children, and the children of $u$ have distinct labels, which are all different from the opposite directional letter of $\lambda(u)$.
\end{enumerate}
\end{lem}

\begin{proof} If $p,q$ are two children of the root $t$ of $T_{(F,t)}$ then by \ref{enu:E2} the cell $p$ (resp. $q$) is on the $\lambda(p)$ (resp. $\lambda(q)$) side of the cell $t$ in $F$, where $\lambda(p),\lambda(q)\in\{N,E,S,W\}$ and $\lambda(p)\neq\lambda(q)$. The assertion (i) follows.

For any non-root node $u\in T_{(F,t)}$, let $w$ be the parent of $u$. Suppose $\lambda(u)=N$, by \ref{enu:E2} the  cell $w$ is on the $S$ side of the cell $u$ in $F$. If $p,q$ are two children of $u$ in $T_{(F,t)}$ then by \ref{enu:E2} the cell $p$ (resp. $q$) is on the $\lambda(p)$ (resp. $\lambda(q)$) side of $u$ in $F$, where $\lambda(p),\lambda(q)\in\{N,E,W\}$ and $\lambda(p)\neq\lambda(q)$. The same argument applies to the cases $\lambda(u)\in\{E,W,S\}$. The assertion (ii) follows.
\end{proof}

\smallskip
Now, we describe the bijection $\varphi:\QT_n\rightarrow\CF_n$ in Theorem \ref{thm:fighting-fish-marked-stem-cell}.

\medskip 
\noindent
\emph{Proof of Theorem \ref{thm:fighting-fish-marked-stem-cell}.} 
Given a tree $(t;T_1,T_2,T_3,T_4)\in\QT_n$, let $T$ be the associated labeled rooted tree, where the root $t$ has the four subtrees ${\chi_N}(T_1)$, ${\chi_E}(T_2)$, ${\chi_S}(T_3)$ and ${\chi_W}(T_4)$ (possibly empty). Choose a chain $\tau$ of subtrees of $T$, and let $F$ be the fighting fish with the root cell $t$ constructed from the chain $\tau$ using Algorithm D. By Proposition \ref{pro:isomorphic}, the fighting fish $F$ is uniquely determined by $T$ independent of the choice of $\tau$. We define the map $\varphi$ by setting $\varphi:(t;T_1,T_2,T_3,T_4)\mapsto (F,t)\in\CF_n$.

For the tree $(t;T_1,T_2,T_3,T_4)$, let $w$ (resp. $x$, $y$ and $z$) be the root of $T_1$ (resp. $T_2$, $T_3$ and $T_4$). Then the stem cell $w$ (resp. $x$, $y$ and $z$) is on the $N$ (resp. $E$, $S$ and $W$) side of the root cell $t$ in $F$. Consider a labeled rooted tree $T'$ where the root $t$ has the four subtrees  ${\chi_E}(T_1)$, ${\chi_N}(T_2)$, ${\chi_W}(T_3)$ and ${\chi_S}(T_4)$. Let $F'$ be the fighting fish constructed from $T'$ using Algorithm \ref{algo:tree-to-fish}. Then the stem cell $w$ (resp. $x$, $y$ and $z$) is on the $E$ (resp. $N$, $W$ and $S$) side of the root cell $t'$ in $F'$.
By \ref{enu:B2}--\ref{enu:B5}, the tree $T'$ is exactly obtained from $T$ by making the exchanges of labels $N\leftrightarrow E$ and $S\leftrightarrow W$.
That is, if $v$ is a child of a non-root node $u$ in $T$ with $\lambda(v)=N$ (resp. $E$, $S$ and $W$) then the label of $v$ becomes $\lambda(v)=E$ (resp. $N$, $W$ and $S$) in $T'$ for all $\lambda(u)\in\{N,E,S,W\}$. It follows that if the cell $v$ is on the $N$ (resp. $E$, $S$ and $W$) side of its parent cell $u$ in $F$ then the cell $v$ becomes on the $E$ (resp. $N$, $W$ and $S$) side of $u$ in $F'$. Thus, $F'$ is the conjugation of $F$. Since $T'$ is associated to the tree $(t;T_2,T_1,T_4,T_3)\in\QT_n$, by the map $\varphi$ we have $\varphi:(t;T_2,T_1,T_4,T_3)\mapsto (F',t')=(F^{\cc},t^{\cc})$.

On the other hand, given a pair $(F,t)\in\CF_n$, let $T_{(F,t)}$ be the labeled rooted tree on the stem cells of $F$ obtained by a propagation of labels from the cell $t$ using Algorithm \ref{algo:fish-to-tree} with an arbitrary choice $\lambda(t)\in\{N,E,S,W\}$. Note that the tree $T_{(F,t)}$ satisfies conditions (i) and (ii) of Lemma \ref{lem:NESW-fish-tree}. By Lemma \ref{lem:NESW-representation},  $T_{(F,t)}$ is associated to a tree in $\QT_n$. Let $w$, $x$, $y$ and $z$ be the children of the root $t$ with label $N$, $E$, $S$ and $W$, respectively (if any). Let $T_w$ (resp. $T_x$, $T_y$ and $T_z$) be the subtree rooted at $w$ (resp. $x$, $y$ and $z$) of $T_{(F,t)}$. We define the a map $\varphi'$ by setting $\varphi':(F,t)\mapsto (t;T_1,T_2,T_3,T_4)\in\QT_n$, where $T_1={\chi_N}^{-1}(T_w)$, $T_2={\chi_E}^{-1}(T_x)$, $T_3={\chi_S}^{-1}(T_y)$ and $T_4={\chi_W}^{-1}(T_z)$. 

For the pair $(F,t)$, the stem cell $w$ (resp. $x$, $y$ and $z$) is on the $N$ (resp. $E$, $S$ and $W$) side of the root cell $t$. Then in the conjugation $F^{\cc}$ of $F$, the stem cell $w^{\cc}$ (resp. $x^{\cc}$, $y^{\cc}$ and $z^{\cc}$) becomes on the $E$ (resp. $N$, $W$ and $S$) side of the cell $t^{\cc}$. Let $T_{(F^{\cc},t^{\cc})}$ be the labeled rooted tree on the stem cells of $F^{\cc}$ obtained by a propagation of labels from the cell $t^{\cc}$ using Algorithm \ref{algo:fish-to-tree}.
By abuse of notation, let $t$ (resp. $w$, $x$, $y$ and $z$) denote the node corresponding to the stem cell $t^{\cc}$ (resp. $w^{\cc}$, $x^{\cc}$, $y^{\cc}$ and $z^{\cc}$) of $F^{\cc}$, and let $T_w$ (resp. $T_x$, $T_y$ and $T_z$) denote the subtree rooted at $w$ (resp. $x$, $y$ and $z$) of $T_{(F^{\cc},t^{\cc})}$. By the map $\varphi'$, we have $\varphi':(F^{\cc},t^{\cc})\mapsto (t;T'_1,T'_2,T'_3,T'_4)$, where $T'_1={\chi_N}^{-1}(T_x)$, $T'_2={\chi_E}^{-1}(T_w)$, $T'_3={\chi_S}^{-1}(T_z)$ and $T'_4={\chi_W}^{-1}(T_y)$. Since $T'_1=T_2$, $T'_2=T_1$, $T'_3=T_4$ and $T'_4=T_3$, we have $\varphi':(F^{\cc},t^{\cc})\mapsto (t;T_2,T_1,T_4,T_3)$.

For the map $\varphi:(t;T_1,T_2,T_3,T_4)\mapsto (F,t)$ defined above, by Lemma \ref{lem:cell-union}(ii) the stem cells of $F$ consist of the cells corresponding to the nodes of $T$ and carry the same labeling as in $T$, where $T$ is the labeled rooted tree associated to the tree $(t;T_1,T_2,T_3,T_4)$.  Thus, by a propagation of labels from the root cell $t$ using Algorithm \ref{algo:fish-to-tree}, we observe that $T$ is exactly the labeled rooted tree on the stem cells of $F$. It follows that $\varphi':(F,t)\mapsto (t;T_1,T_2,T_3,T_4)$. Thus, $\varphi'\circ\varphi$ is an identity function on $\QT_n$.

On the other hand, for the map $\varphi':(F,t)\mapsto (t;T_1,T_2,T_3,T_4)$ defined above, the labeled rooted tree $T_{(F,t)}$ on the stem cells of $F$ is associated to the tree $(t;T_1,T_2,T_3,T_4)$. By Proposition \ref{pro:isomorphic}, the labeled rooted tree associated to $(t;T_1,T_2,T_3,T_4)$ determines a unique fighting fish in $\F_n$ using Algorithm \ref{algo:tree-to-fish}. It follows that $\varphi:(t;T_1,T_2,T_3,T_4)\mapsto (F,t)$. Thus, $\varphi\circ\varphi'$ is an identity function on $\CF_n$. Hence the map $\varphi$ is a bijection, with $\varphi'$ the inverse of $\varphi$. The announced bijection in Theorem \ref{thm:fighting-fish-marked-stem-cell} is established. 
\qed

\medskip
For all $n\ge 1$, we define
\begin{equation}
\TF_n:=\{(F,t)\mid F\in\F_n\mbox{ and $t$ is a tail of $F$}\}.
\end{equation}
The bijection $\varphi$ in Theorem \ref{thm:fighting-fish-marked-stem-cell} restricted to the set $\TF_n$ results in the following bijection.

\begin{thm} \label{thm:pair-ternary-trees} For all $n\ge 1$, there is a bijection $\varphi^*:\TF_n\rightarrow (\T\times\T)_{n-1}$
such that $(F,t)\mapsto (T_1,T_2)$ implies $(F^{\cc}, t^{\cc})\mapsto (T_2,T_1)$.
\end{thm}

\begin{proof} Given a pair $(F,t)\in\TF_n$, by the bijection $\varphi$ in Theorem \ref{thm:fighting-fish-marked-stem-cell}, let $\varphi^{-1}:(F,t)\mapsto (t;T_1,T_2,T_3,T_4)$. Since $t$ is a tail of $F$, there is no cell on the $N$ or $E$ side of $t$. Thus, the subtrees $T_1$ and $T_2$ are empty. We define the map $\varphi^*$ by setting $\varphi^*:(F,t)\mapsto (T_3,T_4)\in(\T\times\T)_{n-1}$. Moreover, since $\varphi^{-1}:(F^{\cc},t^{\cc})\mapsto (t;T_2,T_1,T_4,T_3)$, it follows that $\varphi^*:(F^{\cc},t^{\cc})\mapsto (T_4,T_3)$.

The inverse map of $\varphi^*$ can be obtained as follows. Given a pair $(T_3,T_4)\in(\T\times\T)_{n-1}$, we form a tree $(t;T_1,T_2,T_3,T_4)\in\QT_n$, where $T_1$ and $T_2$ are empty. By Theorem \ref{thm:fighting-fish-marked-stem-cell}, let $\varphi:(t;T_1,T_2,T_3,T_4)\mapsto (F,t)$. Since $T_1$ and $T_2$ are empty, there is no stem cells on the $N$ or $E$ side of the root cell $t$ in $F$. Then $t$ is the top (resp. bottom) cell of the strip $\Asc(t)$ (resp. $\Des(t)$). Thus, $t$ is a tail of $F$. We define the map ${\varphi^*}^{-1}$ by setting ${\varphi^*}^{-1}:(T_3,T_4)\mapsto (F,t)\in\TF_n$.
Moreover, since $\varphi:(t;T_2,T_1,T_4,T_3)\mapsto (F^{\cc},t^{\cc})$, it follows that ${\varphi^*}^{-1}:(T_4,T_3)\mapsto (F^{\cc},t^{\cc})$. The announced bijection is established.
\end{proof}

\section{On fighting fish with a marked strip} \label{sec:marked-strip}

In this section, we shall establish the bijection $\phi:\T_n\rightarrow\MF_n$ in Theorem \ref{thm:TT-to-marked-fish} and the $(n+1)$-to-2 bijection between $\F_n$ and $\T_n$ in Theorem \ref{thm:2-to-(n+1)}.

\subsection{A proof of Theorem \ref{thm:TT-to-marked-fish}}

\begin{lem} \label{lem:the-map-phi}
For all $n\ge 1$, there is a bijection $\phi: T\mapsto (F,Q)$ of $\T_n$ onto $\MF_n$ such that the topmost stem cell of the descending strip $Q$ of the fighting fish $F$ is the cell associated to the root of the ternary tree $T$. 
\end{lem}

\begin{proof}
Given a ternary tree $T\in\T_n$, let $t$ be the root, and let $T_1$, $T_2$ and $T_3$ be the left, middle and right subtree of $t$, respectively (possibly empty). Then $T$ can be viewed as an ordered tree $(t; T_1, T_2, T_3, T_4)\in\QT_n$, where $T_4$ is empty. Let
$w$, $x$ and $y$ be the root of $T_1$, $T_2$ and $T_3$, respectively. By the map $\varphi$ in Theorem \ref{thm:fighting-fish-marked-stem-cell}, we obtain $\varphi:(t; T_1, T_2, T_3, T_4)\mapsto (F,t)$, where the stem cell $w$ (resp. $x$ and $y$) is on the $N$ (resp. $E$ and $S$) side of the root cell $t$ in the fighting fish $F$. Since there is no stem cell on the $W$ side of $t$, the root cell $t$ is the topmost stem cell of the strip $\Des(t)$. We define the map $\phi$ by setting $\phi:T\mapsto (F,\Des(t))\in\MF_n$.

On the other hand, given a pair $(F,Q)\in\MF_n$, let $t$ be the topmost stem cell of the descending strip $Q$. Then there is no stem cell on the $W$ side of $t$.
Let $w$, $x$ and $y$ be the stem cells connected to the cell $t$ on the $N$, $E$ and $S$ side of $t$, respectively (if any). 
By the map $\varphi^{-1}$ in Theorem \ref{thm:fighting-fish-marked-stem-cell}, we obtain $\varphi^{-1}:(F,t)\mapsto (t;T_1,T_2,T_3,T_4)$, where $T_1$, $T_2$ and $T_3$ are the ternary trees rooted at the nodes $w$, $x$ and $y$, respectively, and $T_4$ is empty.
Let $T\in\T_n$ be the ternary tree obtained from $(t;T_1,T_2,T_3,T_4)$ by setting $T_1$, $T_2$ and $T_3$ to be the left, middle and right subtrees of the root $t$. We define the map $\phi^{-1}$ by setting $\phi^{-1}:(F,Q)\mapsto T$.
The result follows.
\end{proof}

In the following, we focus on the vertex-labeling $\chi_E:\T_n\rightarrow\T^E_n$, so that the left, middle and right child of the root of a ternary tree are labeled by $N$, $E$ and $S$, respectively.

\begin{lem} \label{lem:v-labeling-rule} For any ternary tree $T\in\T_n$ and any node $u\in T$, under the bijection $\chi_E:\T_n\rightarrow\T^E_n$ the abscissa of $u$ is even if $\lambda(u)\in\{E,W\}$, and odd if $\lambda(u)\in\{N,S\}$.
\end{lem}

\begin{proof} Note that the root of $T$ is at abscissa 0 with label $E$. The result follows from the rules \ref{enu:B2}--\ref{enu:B5}.
\end{proof}

For convenience, each ternary tree $T\in\T_n$ is identified with the labeled rooted tree $\chi_E(T)\in\T^E_n$, so that  each node $u\in T$ carries the abscissa $\alpha(u)$ in $T$ and the label $\lambda(u)$ in $\chi_E(T)$.

\begin{lem}  \label{lem:unique-stem-cell-in-strips}
For any tree $T\in\T_n$, let $t$ be the root of $T$. Under the bijection $\phi$ in Lemma \ref{lem:the-map-phi}, let $(F,\Des(t))=\phi(T)\in\MF_n$. Then the following properties hold.
\begin{enumerate}
\item For any descending strip $Q\neq \Des(t)$ of $F$, there is a node $u$ with label $\lambda(u)\in\{N,S\}$ in $T$ such that the strip $Q$ is created by $u$ in the construction of $F$, and the cell $u$ is the unique stem cell with label $N$ or $S$ in the strip $Q$.
\item For any ascending strip $R$ of $F$, there is a node $u$ with label $\lambda(u)\in\{E,W\}$ in $T$ such that the strip $R$ is created by $u$ in the construction of $F$, and the cell $u$ is the unique stem cell with label $E$ or $W$ in the strip $R$.
\item If the tree $T$ contains $i$ nodes with odd abscissas and $j$ nodes with even abscissas then the fighting fish $F$ contains $i+1$ descending strips and $j$ ascending strips.
\end{enumerate}
\end{lem}

\begin{proof}
For any chain $T_1\subset  \cdots \subset T_n$ of subtrees of $T$, let $F_j$ be the fighting fish constructed from $T_j$ using Algorithm \ref{algo:tree-to-fish}, for $1\le j\le n$. For any descending strip $Q\neq \Des(t)$ of $F$, the strip $Q$ is created by an operation \ref{enu:D1} or \ref{enu:D4} in $F_i$ for some $i>1$. Let $u=T_i\setminus T_{i-1}$. We have $\lambda(u)\in\{N,S\}$. We shall prove by induction that the cell $u$ remains the unique stem cell with label $N$ or $S$ in the strip $Q$ of $F_j$ for all $j\ge i$.

For $k>i$, suppose the cell $u$ is the unique stem cell with label $N$ (resp. $S$) in the strip $Q$ of $F_{k-1}$. Let $v=T_k\setminus T_{k-1}$. If the stem cell $v$ is in the strip $Q$ of $F_k$ then it is on the $E$ or $W$ side of $u$. Then the node $v$ is a descendant of $u$ with $\lambda(v)\in\{E,W\}$ in $T_k$. Thus, the cell $u$ remains the unique stem cell with label $N$ (resp. $S$) in the strip $Q$ of $F_k$. The assertion (i) follows from induction. 

The root cell $t$ is the unique stem cell with label $E$ in the ascending strip $\Asc(t)$ since the other stem cells in $\Asc(t)$ are on the $N$ or $S$ side of $t$. By Algorithm \ref{algo:tree-to-fish},
for any ascending strip $R\neq \Asc(t)$ of $F$, the strip $R$ is created by an operation \ref{enu:D2} or \ref{enu:D3} in $F_i$ for some $i>1$. Let $u=T_i\setminus T_{i-1}$. We have $\lambda(u)\in\{E,W\}$. A symmetric argument of the above proof applies to the assertion that the cell $u$ is the unique stem cell with label $E$ or $W$ in the strip $R$ of $F_j$ for all $j\ge i$. The assertion (ii) follows.

For any non-root node $u\in T$, if the abscissa of $u$ is odd then by Lemma \ref{lem:v-labeling-rule} we have $\lambda(u)\in\{N,S\}$, and hence by (i) the cell $u$ is the unique stem cell with label $N$ or $S$ in the strip $\Des(u)$ of $F$. Moreover, if the abscissa of $u$ is even then $\lambda(u)\in\{E,W\}$, and hence by (ii) the cell $u$ is the unique stem cell with label $E$ or $W$ in the strip $\Asc(u)$ of $F$. 

For the root $t$ of $T$, we have $\alpha(t)=0$ and $\lambda(t)=E$. In the fighting fish $F$, the root cell $t$ is the unique stem cell with label $E$ in $\Asc(t)$. Since $t$ is the topmost stem cell in $\Des(t)$, the other stem cells of $\Des(t)$ are on the $E$ side of $t$. Thus, the strip $\Des(t)$ contains no stem cell with label $N$ or $S$.
It follows that the difference between the number of descending strips in $F$ and the number of nodes with odd abscissas in $T$ is 1, and that the number of ascending strips equals the number of nodes with even abscissas. The assertion (iii) follows.
\end{proof}

By Lemmas \ref{lem:the-map-phi} and \ref{lem:unique-stem-cell-in-strips}(iii), we have established the announced bijection in Theorem \ref{thm:TT-to-marked-fish}.

\begin{exa} {\rm
Consider the fighting fish $F$ shown in Figure \ref{fig:stem-cell}.  If the cell $w$ is set to be the root cell then $(F,\Des(w))$ maps to the ternary tree shown in Figure \ref{fig:ternary-tree}. Moreover, if the cell $h$ is set to be the root cell then $(F,\Des(h))$ maps to the ternary tree shown Figure \ref{fig:ternary-tree-2}. In either case, the corresponding ternary tree contains 4 nodes with odd abscissas and 5 nodes with even abscissas. Note that the fighting fish $F$ contains 5 descending strips and 5 ascending strips. 

For the case $n=4$, we compile a list of one-to-one correspondences between the members of $\MF_4$ and $\T_4$ in Figure \ref{fig:mapping}. 
}
\end{exa}

\begin{figure}[ht]
\begin{center}
\psfrag{a}[][][1]{$a$}
\psfrag{b}[][][1]{$b$}
\psfrag{c}[][][1]{$c$}
\psfrag{d}[][][1]{$d$}
\psfrag{e}[][][1]{$e$}
\psfrag{f}[][][1]{$f$}
\psfrag{g}[][][1]{$g$}
\psfrag{h}[][][1]{$h$}
\psfrag{w}[][][1]{$w$}
\includegraphics[width=3.2in]{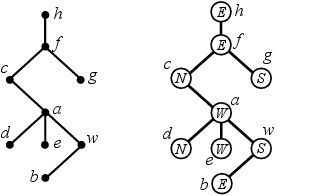}
\end{center}
\caption{\small A ternary tree and its vertex-labeling.} \label{fig:ternary-tree-2}
\end{figure}

\subsection{A combinatorial enumeration of fighting fish} 
Using the bijection $\phi$ in Theorem \ref{thm:TT-to-marked-fish}, we describe an $(n+1)$-to-2 bijection between the two sets $\F_n$ and $\T_n$.

For a strip $Q$ of a fighting fish, we use the notation $Q^{\cc}$ for the image of the strip $Q$ under a horizontal reflection. For $0\le \ell\le n-1$, define
\begin{equation} \label{eqn:def_T_n_ell}
\begin{aligned}
\T_{n,\ell} &= \{T\in\T_n\mid\mbox{$T$ contains $\ell$ nodes with odd abscissas}\}; \\
\F_{n,\ell} &= \{F\in\F_n\mid\mbox{$F$ contains $\ell+1$ descending strips}\}.
\end{aligned}
\end{equation} 

\medskip
\noindent
\emph{Proof of Theorem \ref{thm:2-to-(n+1)}.}
For any fighting fish $F\in\F_{n,\ell}$ ($0\le \ell\le n-1$), there are $\ell+1$ descending strips and $n-\ell$ ascending strips in $F$. Let $Q_1,\dots,Q_{\ell+1}$ be the descending strips of $F$. By Theorem \ref{thm:TT-to-marked-fish}, we have $\phi^{-1}(F,Q_j)\in\T_{n,\ell}$ for $j=1,\dots,\ell+1$. Hence
\begin{equation} \label{eqn:Q_is_descending}
(\ell+1)|\F_{n,\ell}| = |\T_{n,\ell}|.
\end{equation}
Let $R_1,\dots,R_{n-\ell}$ be the ascending strips of $F$. Then
$R^{\cc}_1,\dots,R^{\cc}_{n-\ell}$ are the descending strips of $F^{\cc}$.
By Theorem \ref{thm:TT-to-marked-fish},  we have $\phi^{-1}(F^{\cc},R^{\cc}_j)\in\T_{n,n-\ell-1}$ for $j=1,\dots, n-\ell$. Hence
\begin{equation} \label{eqn:Q_is_ascending}
(n-\ell)|\F_{n,\ell}| = |\T_{n,n-\ell-1}|.
\end{equation}
It follows that
\begin{align*}
(n+1)|\F_n| &= (n+1)\sum_{\ell=0}^{n-1} |\F_{n,\ell}| \\
            &= \sum_{\ell=0}^{n-1} \big(|\T_{n,\ell}|+|\T_{n,n-\ell-1}|\big) \\
           &= 2|\T_n|.
\end{align*}
We have established an $(n+1)$-to-2 bijection between $\F_n$ and $\T_n$. The result follows.
\qed

\section{Left ternary trees and fighting fish} \label{sec:left-ternary}

In this section, we describe the bijection $\psi:\LT_n\rightarrow\F_n$ in Theorem \ref{thm:LTT-to-fish-bijection}. First, we review the bijection $\phi:\T_n\rightarrow\MF_n$ in Theorem \ref{thm:TT-to-marked-fish} for some useful properties.

\subsection{Properties of the map $\phi$}

For any ternary tree $T\in\T_n$ and a node $x\in T$, let $L_x$ denote the subtree of $T$ consisting of $x$ and the left subtree of $x$. If $u$ is a descendant of $x$, let $\mu(x,u)$ denote the path from $x$ to $u$. Sometimes, the path $\mu(x,u)$ is expressed as a word on the alphabets $\{N,E,S,W\}$, which is formed by the labels of the nodes in the path. Let $P_x$ denote the parent of $x$. When $x$ is the root of $T$, the node $P_x$ can be viewed as a virtual node. Let $(F,Q)=\phi(T)\in\MF_n$. By Lemma \ref{lem:the-map-phi}, the root cell of $F$ is the topmost stem cell in the descending strip $Q$.
Note that if $\lambda(x)=E$ (resp. $N$) then in the fighting fish $F$ the stem cell $P_x$ is on the $W$ (resp. $S$) side of the cell $x$. When $x$ is the root of $T$, we have $\lambda(x)=E$ and the cell $P_x$ can be viewed as a virtual cell attached to the left upper edge of the strip $Q$. We say that an ascending strip \emph{intersects} a descending strip if these two strips have a cell in common.

\begin{pro} \label{pro:path} Let $T\in\T_n$, and let $(F,Q)=\phi(T)\in\MF_n$. Let $x, y\in T$ be two nodes with $\lambda(x)=E$ and $\lambda(y)=N$.  Then the following results hold.
\begin{enumerate}
\item If $a$ is a node in the left subtree of $x$ such that $\alpha(a)=\alpha(x)$ and every node $u\in\mu(x,a)$ satisfies the condition $\alpha(u)\ge \alpha(x)$ then $\lambda(a)=W$, $\Asc(a)$ intersects $\Des(x)$ at a cell between the stem cells $x$ and $P_x$, and the cell $a$ is the first stem cell in $\Asc(a)$ above $\Des(x)$ in $F$. 
\item If $b$ is a node in the left subtree of $y$ such that $\alpha(b)=\alpha(y)$ and every node $u\in\mu(y,b)$ satisfies the condition $\alpha(u)\ge \alpha(y)$ then $\lambda(b)=S$, $\Des(b)$ intersects $\Asc(y)$ at a cell between the stem cells $y$ and $P_y$, and the cell $b$ is the first stem cell in $\Des(b)$ below $\Asc(y)$ in $F$.
\end{enumerate}
\end{pro}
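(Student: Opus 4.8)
The plan is to prove both statements (i) and (ii) simultaneously by induction on the length of the path $\mu(x,a)$ (respectively $\mu(y,b)$), exploiting the symmetric roles of the descending/ascending pictures under conjugation. Since the two statements are exchanged by the horizontal reflection $F\mapsto F^{\cc}$ together with the swap $E\leftrightarrow N$, $W\leftrightarrow S$ (which is exactly the effect of reflecting the labelling rules (D2)--(D5) and the insertion rules (C1)--(C4)), it suffices to give the argument for (i); statement (ii) then follows verbatim after conjugation.

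First I would set up the base case. The shortest admissible path is $\mu(x,a)$ of length $2$: since $a$ has the same abscissa $2k$ as $x$ but is a strict descendant in the left subtree $L_x$, the path must begin with a left step (abscissa $2k+1$) and then a right step back to $2k$; by Lemma~\ref{lem:v-labeling-rule}(ii)-(iii) the intermediate node $v$ (the left child of $x$) has $\lambda(v)=N$, and then $a$, being the right child of $v$ with $\lambda(v)=N$, has $\lambda(a)=W$ by (D3). Here I would read off directly from the insertion algorithm: since $\lambda(v)=N$, the cell $v$ is attached by (C1) to the right upper edge of $x$ in $\Asc(x)$ — wait, $x$ has $\lambda(x)=E$, so $v$ sits in $\Asc(x)$ just above $\Des(x)$ at the position of $x$; then $\lambda(a)=W$ triggers (C3), which inserts an ascending strip immediately above $\Asc(v)=\Asc(x)$, and by the description of (C3) its topmost cell $a$ is glued onto $\Asc(x)$ at the cell corresponding to $x$, hence $\Asc(a)$ meets $\Des(x)$ precisely at the cell lying between $x$ and $P_x$ (recall $P_x$ is on the $W$ side of $x$ in $\Des(x)$). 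Tracking the "first stem cell in $\Asc(a)$ above $\Des(x)$" is then immediate from the fact that $a=c_k$ is the newly created stem cell and no stem cell was created below it in that strip.

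For the inductive step, consider an admissible path $\mu(x,a)$ of length $>2$, and let $v$ be the first node after $x$ on this path. The constraint $\alpha(u)\ge 2k$ for all $u$ on the path, together with the parity rule Lemma~\ref{lem:v-labeling-rule}(iv), forces the following: either $v$ is the left child ($\alpha(v)=2k+1$, $\lambda(v)=N$) or the middle child ($\alpha(v)=2k$, $\lambda(v)=E$) of $x$. If $v$ is the middle child, then $\lambda(v)=\lambda(x)=E$ by Lemma~\ref{lem:v-labeling-rule}(i), and $v$ sits on the $E$ side of $x$ in $\Des(x)$ (same descending strip, adjacent stem cells), so $\Des(v)=\Des(x)$ and I can apply the induction hypothesis to the shorter path $\mu(v,a)$ — but I must check that the cell of $\Des(v)=\Des(x)$ between $v$ and $P_v=x$ is indeed "between $x$ and $P_x$"; this requires noting that inserting strips on the $W$ side of $v$ or elsewhere does not disturb the portion of $\Des(x)$ strictly left of $x$, which is where $P_x$ lives. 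If $v$ is the left child, then $\lambda(v)=N$ and I would instead look at the sub-path from $v$: now $a\in L_x$ has abscissa $2k$, so somewhere the path comes back down through abscissa $2k+1$ at a node with label in $\{N,S\}$ and then to a node of abscissa $2k$ with label $W$; I would peel off the initial segment of the path inside $\Asc(v)$-territory and reduce to statement (ii) applied to $(y,b)=(v,\text{that node})$ at level $2k+1$, then descend one more step. This cross-invocation of (ii) is why the two statements must be proved together.

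The main obstacle I anticipate is not the labelling bookkeeping — that is a mechanical consequence of (D2)--(D5) and the parity lemma — but rather the geometric persistence claim: that the cell at which $\Asc(a)$ meets $\Des(x)$ lies strictly between $x$ and $P_x$, and that $a$ is the \emph{first} stem cell of $\Asc(a)$ above $\Des(x)$, must be shown to survive all the later insertions performed by Algorithm~C for nodes processed after $a$. Here I would use the remark following Proposition~\ref{pro:alt-construction}: once created, a stem cell is never destroyed, and inspecting (C1)--(C4) shows that inserting a new strip on the $N,E,S$ or $W$ side of some cell only lengthens existing strips on one prescribed side, never relocating the already-built stem cells relative to one another within a fixed strip. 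Concretely, any insertion that could enlarge $\Asc(a)$ downward would itself be a (C3)-insertion triggered by a $W$-labelled descendant of $a$, whose abscissa would be $\le 2k-1 < 2k$, contradicting admissibility of the paths we are considering, or it happens in a different part of the fish entirely; so the "first stem cell above $\Des(x)$" status of $a$ is stable. Making this stability precise — ideally by a short lemma stating which strips get elongated by (C1)--(C4) and on which side — is the step I would budget the most care for.
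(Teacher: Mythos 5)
Your overall strategy is the right one and matches the paper in spirit (mutual induction, labels governed by (D2)--(D5)/Lemma~\ref{lem:v-labeling-rule}, geometry read off from (C1)--(C4), statement (ii) treated as the mirror of (i)), but the inductive step as you describe it has a genuine gap. First, the middle-child case is vacuous: by definition $L_x$ consists of $x$ together with the descendants of $x$ in its \emph{left} subtree, so for $a\in L_x$, $a\neq x$, the path $\mu(x,a)$ necessarily begins with a left step; including middle descendants suggests a misreading of $L_x$, though it does no harm. The real problem is the left-child case. Your recipe ``peel off the initial excursion, apply (ii) to $(v,b)$, then descend one more step'' only reaches the first node of abscissa $2k$ encountered after that excursion, whereas an admissible path may return to abscissa $2k$ several times and continue with further middle steps (label $W$) and further excursions before ending at $a$: already a path with label word $ENWNW$ escapes your scheme, because after the first $W$-node neither (i) nor (ii) can be invoked (the hypothesis of (i) requires a node labelled $E$), and you supply no statement that applies from a $W$-labelled node. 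The paper resolves exactly this by factorizing the entire path as $EN^{r_1-1}\mu(y_1,b_1)W^{s_1}\cdots N^{r_t-1}\mu(y_t,b_t)W^{s_t}$ and inducting not on length but on $\delta(x,a)=\max\{\alpha(u)-\alpha(x):u\in\mu(x,a)\}$, so that every excursion $\mu(y_i,b_i)$ is an instance of (ii) with $\delta$ smaller by one, and the geometric conclusions for the successive $W$-blocks are chained via (C3) to locate $\Asc(a)$ against $\Des(x)$; correspondingly its base case $\delta=1$ is the whole family of alternating words $EN^{r_1}W^{s_1}\cdots N^{r_t}W^{s_t}$, not just the length-two path. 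Without some such factorization (or an auxiliary claim applicable from $W$- and $S$-labelled nodes) your induction does not close.

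Two further points. The reduction of (ii) to (i) by conjugation is not immediate as stated: the labelled rooted tree attached to $F^{\cc}$ is rooted at the topmost stem cell of a descending strip of $F^{\cc}$, hence at a different cell from the root of $T_F$, so subtrees and paths of $T_F$ do not transparently correspond to those of the conjugate tree; what is true, and what the paper uses, is that the \emph{argument} for (ii) is the mirror image of that for (i), the two being proved simultaneously in one mutual induction --- which you need anyway, since your step for (i) invokes (ii). Finally, your concern about persistence of the configuration under later insertions is legitimate but secondary: the paper disposes of it via the remark following Proposition~\ref{pro:alt-construction} that stem cells, once created, do not change over the growing process, together with the fact that (C3)/(C4) insert whole strips between existing ones.
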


\begin{proof} Since $\lambda(x)=E$ and $\lambda(y)=N$, by Lemma \ref{lem:v-labeling-rule}, $\alpha(x)$ is even and $\alpha(y)$ is odd.  Let $\alpha(x)=2k$ and $\alpha(y)=2k+1$ for some $k\in\ZZ$. Let $\delta(x,a):= \max\{\alpha(u)-\alpha(x): u\in \mu(x,a)\}$ and $\delta(y,b):=\max\{\alpha(u)-\alpha(y): u\in \mu(y,b)\}$.
We shall prove the assertions (i) and (ii) by induction on $\delta(x,a)$ and $\delta(y,b)$.  

Since $\lambda(x)=E$ and the node $a$ is in the left subtree of $x$, if $\delta(x,a)=1$ then by \ref{enu:B2}--\ref{enu:B5} the path $\mu(x,a)$ is of the form 
\begin{equation} \label{eqn:mu(x,a)-1}
\mu(x,a): EN^{r_1}W^{s_1}N^{r_2}W^{s_2}\cdots N^{r_t}W^{s_t}
\end{equation}
for some strictly positive integers $r_1,s_1,r_2,s_2,\dots,r_t,s_t$ and $t$. Traversing the path $\mu(x,a)$ from $x$, we encounter the first segment $N^{r_1}W^{s_1}$, which consists of $r_1$ nodes $u_1, \dots,u_{r_1}$ at abscissa $2k+1$ with label $N$, followed by $s_1$ nodes $v_1,\dots,v_{s_1}$ at abscissa $2k$ with label $W$. Then in $F$ the cells $u_1,\dots,u_{r_1}$ are on the $N$ side of $x$, and the cells $v_1,\dots,v_{s_1}$ are on the $W$ side of $u_{r_1}$. Since $v_1$ a child of $u_{r_1}$ with $\lambda(v_1)=W$, the cell $v_1$ is created by an operation \ref{enu:D3} on $u_{r_1}$, and the strip $\Asc(v_1)$ is inserted to the left of $\Asc(x)$ intersecting $\Des(x)$ at a cell between $x$ and $P_x$. Similarly, since $v_i$ is a child of $v_{i-1}$ with $\lambda(v_i)=W$, the strip $\Asc(v_i)$ is inserted to the left of $\Asc(v_{i-1})$, for $2\le i\le s_1$. Thus, $\Asc(v_1),\dots,\Asc(v_{s_1})$ are distinct strips intersecting $\Des(x)$ at cells between $x$ and $P_x$. Note that the same observation holds for each segment $N^{r_i}W^{s_i}$ of $\mu(x,a)$. Thus, for the terminal node $a$ of $\mu(x,a)$, we have $\lambda(a)=W$, and the assertion (i) holds for this initial case.

Since $\lambda(y)=N$ and the node $b$ is in the left subtree of $y$, if $\delta(y,b)=1$ then by \ref{enu:B2}--\ref{enu:B5} the path $\mu(y,b)$ is of the form
\begin{equation} \label{eqn:mu(y,b)-1}
\mu(y,b): NE^{p_1}S^{q_1}E^{p_2}S^{q_2}\cdots E^{p_{t'}}S^{q_{t'}}
\end{equation}
for some strictly positive integers $p_1,q_1,p_2,q_2,\dots,p_{t'},q_{t'}$ and $t'$. Note that in $F$ the successive cells along the path $\mu(y,b)$ in (\ref{eqn:mu(y,b)-1}) is a conjugation of the case for $\mu(x,a)$ in (\ref{eqn:mu(x,a)-1}). A symmetric argument applies. Thus, we have $\lambda(b)=S$, and the assertion (ii) holds for this initial case.

Suppose the assertions (i) and (ii) hold for $\delta(x,a)\le m-1$ and  $\delta(y,b)\le m-1$, respectively. Let $\mu(x,a)$ be a path with $\delta(x,a)=m$. Since every node $u\in\mu(x,a)$ satisfies the condition $\alpha(u)\ge 2k$, there exist nodes $y_1, b_1, \dots, y_t, b_t$ at abscissa $2k+1$ such that the path $\mu(x,a)$ can be factorized as
\begin{equation} \label{eqn:factorization-i}
\mu(x,a): EN^{r_1-1}\mu(y_1,b_1)W^{s_1}N^{r_2-1}\mu(y_2,b_2)W^{s_2}\cdots N^{r_t-1}\mu(y_t,b_t)W^{s_t}
\end{equation}
for some strictly positive integers $r_1,s_1,r_2,s_2,\dots,r_t,s_t$ and $t$, where $\lambda(y_i)=N$, $b_i\in L_{y_i}$, $\alpha(u)\ge 2k+1$ for every node $u\in\mu(y_i,b_i)$, and $\delta(y_i,b_i)\le m-1$ for each $i$. Traversing the path $\mu(x,a)$ from $x$, for the first segment $N^{r_1-1}\mu(y_1,b_1)W^{s_1}$, we encounter $r_1$ nodes $u_1,\dots,u_{r_1}=y_1$ at abscissa $2k+1$ with label $N$, followed by $\mu(y_1,b_1)$ and then $s_1$ nodes $v_1,\dots,v_{s_1}$ at abscissa $2k$ with label $W$. If $y_1=b_1$ then the first segment is $N^{r_1}W^{s_1}$. By the above argument for (\ref{eqn:mu(x,a)-1}), the strips $\Asc(v_1),\dots,\Asc(v_{s_1})$ intersect $\Des(x)$ at cells between $x$ and $P_x$. 
Suppose $y_1\neq b_1$.
Note that the stem cells $u_1,\dots,u_{r_1}=y_1$ are on the $N$ side of $x$, so $\Asc(y_1)=\Asc(x)$. Since $\mu(y_1,b_1)\le m-1$, by induction hypothesis we have $\lambda(b_1)=S$,  $\Des(b_1)$ intersects $\Asc(y_1)$ at a cell between $y_1$ and $P_{y_1}=u_{r_1-1}$, and the cell $b_1$ is the first stem cell in $\Des(b_1)$ below $\Asc(y_1)$. Since $v_1$ is a child of $b_1$ with $\lambda(v_1)=W$, the cell $v_1$ is created by an operation \ref{enu:D3} on $b_1$, and the strip $\Asc(v_1)$ is inserted to the left of $\Asc(y_1)=\Asc(x)$ intersecting $\Des(x)$ at a cell between $x$ and $P_x$. Similarly, the strip $\Asc(v_i)$ is inserted to the left of $\Asc(v_{i-1})$, for $2\le i\le s_1$. Thus,  $\Asc(v_1),\dots,\Asc(v_{s_1})$ intersect $\Des(x)$ at cells between $x$ and $P_x$. The same observation holds for each segment $N^{r_i-1}\mu(y_i,b_i)W^{s_i}$ of the path $\mu(x,a)$. Thus, for the terminal node $a$ of $\mu(x,a)$, we have $\lambda(a)=W$, and the assertion (i) follows.

Let $\mu(y,b)$ be a path with $\delta(y,b)=m$. Since every node $u\in\mu(y,b)$ satisfies the condition $\alpha(u)\ge 2k+1$, there exist nodes $x_1, a_1, \dots, x_{t'}, a_{t'}$ at abscissa $2k+2$ such that the path $\mu(y,b)$ can be factorized as
\begin{equation} \label{eqn:factorization-ii}
\mu(y,b): NE^{p_1-1}\mu(x_1,a_1)S^{q_1}E^{p_2-1}\mu(x_2,a_2)S^{q_2}\cdots E^{p_{t'}-1}\mu(x_{t'},a_{t'})S^{q_{t'}},
\end{equation}
for some strictly positive integers $p_1,q_1,p_2,q_2,\dots,p_{t'},q_{t'}$ and $t'$, where $\lambda(x_i)=E$, $a_i\in L_{x_i}$, $\alpha(u)\ge 2k+2$ for every node $u\in\mu(x_i,a_i)$, and $\delta(x_i,a_i)\le m-1$ for each $i$. Note that in $F$ the successive cells along the path $\mu(y,b)$ in (\ref{eqn:factorization-ii}) is a conjugation of the case for $\mu(x,a)$ in (\ref{eqn:factorization-i}). A symmetric argument applies. Thus, we have $\lambda(b)=S$, and the assertion (ii) follows.
\end{proof}

\begin{exa} {\rm
Consider the fighting fish $F$ shown in Figure \ref{fig:stem-cell} and the descending strip $\Des(w)$ of $F$. The ternary tree $T=\phi^{-1}(F,\Des(w))$ is shown in Figure \ref{fig:ternary-tree}. Note that $\lambda(w)=E$ and $\alpha(w)=0$, and that $e$ (resp. $h$) is a node with abscissa 0 in $L_w$ such that every node $u\in\mu(w,e)$ (resp. $\mu(w,h)$) satisfies the condition $\alpha(u)\ge 0$. Notice that the strip $\Asc(e)$ (resp. $\Asc(h)$) intersects $\Des(w)$ at a cell between  $w$ and $P_w$ (virtual cell), and the cell $e$ (resp. $h$) is the first stem cell in $\Asc(e)$ (resp. $\Asc(h)$) above $\Des(w)$. 

Moreover, note that $\lambda(a)=N$ and $\alpha(a)=1$, and that $f$ (resp. $g$) is a node with abscissa 1 in $L_a$ such that every node $u\in\mu(a,f)$ (resp. $\mu(a,g)$) satisfies the condition $\alpha(u)\ge 1$. Notice that the strip $\Des(f)$ (resp. $\Des(g)$) intersects $\Asc(a)$ at a cell between the stem cells $a$ and $w$, and the cell $f$ (resp. $g$) is the first stem cell in $\Des(f)$ (resp. $\Des(g)$) below $\Asc(a)$.
}
\end{exa}

Let $T\in\T_n$, and let $(F,Q)=\phi(T)\in\MF_n$. For any node $x\in T$ with $\lambda(x)=E$, we define
\begin{equation*} 
\A(x) := \{R\mid \mbox{$R$ is an ascending strip of $F$ intersecting $\Des(x)$ at a cell between $x$ and $P_x$}\}.
\end{equation*}
Note that the strip $R$ is on the left of $\Asc(x)$ if $R\in\A(x)$ since $x$ is on the $E$ side of $P_x$.

\begin{lem} \label{lem:unique-W}
Let $T\in\T_n$, and let $(F,Q)=\phi(T)\in\MF_n$. Let $x\in T$ be a node with $\lambda(x)=E$. For any ascending strip $R$ of $F$, if $R\in\A(x)$ then there is a unique stem cell $q\in R$ with label $W$, and the node $q$ is in the left subtree of $x$ in $T$.
\end{lem}

\begin{proof} Choose a chain $T_1\subset \cdots\subset T_n$ of subtrees of $T$, and let $F_j$ be the fighting fish constructed from $T_j$ using Algorithm \ref{algo:tree-to-fish} for $1\le j\le n$. Let $x=T_i\setminus T_{i-1}$ for some $i$. For any ascending strip $R$ of $F$, by Lemma \ref{lem:unique-stem-cell-in-strips}(ii), the strip $R$ is created by a node, say $q$, with $\lambda(q)\in\{E,W\}$ in $T$, and the cell $q$ is a unique stem cell with label $E$ or $W$ in the strip $R$. 

Suppose the node $q$ is not a descendant of $x$, the cell $q$ can be arranged to be created before $x$ during the construction of $F$, i.e., $q=T_j\setminus T_{j-1}$ for some $j<i$. By Lemma \ref{lem:unique-stem-cell-in-strips}(i), the strip $\Des(x)$ is created by a node $p$, where either $p$ is the root or $p=T_k\setminus T_{k-1}$ is a node with $\lambda(p)\in\{N,S\}$ for some $k<i$. Since $\lambda(x)=E$, we observe that in the fighting fish $F_i$ the cell $x$ is added on the $E$ side of $p$,  attached to the right lower edge of the bottom cell of $\Des(p)$. Thus, $\A(x)$ is empty in $F_i$. If $u$ is a non-stem cell newly added in the strip $R$ in $F_{\ell}$ for some $\ell >i$ then $u$ is the intersection of $R$ and a descending strip which is created by a node $v=T_{\ell}\setminus T_{\ell-1}$ with label $S$. Then $u\in\Des(v)\neq\Des(p)$, and hence the strip $R$ does not intersect $\Des(p)$ at a cell between $x$ and $P_x$. Thus, $R\not\in\A(x)$.

Suppose $R\in\A(x)$. Then the node $q=T_j\setminus T_{j-1}$, for some $j>i$, is a descendant of $x$. We claim that the cell $q$ is labeled with $W$. If $q$ is labeled with $E$ then by \ref{enu:D2} the strip $R$ consists of the cell $q$ itself when it is created, and then grows on the $N$ and $S$ sides of $q$ during the construction of $F$. 
If $u$ is a non-stem cell added in $R$ then it is the intersection of $R$ and a descending strip which is created by a node $v=T_{\ell}\setminus T_{\ell-1}$, for some $\ell>j$, with label $S$. Then $u\in\Des(v)\neq\Des(x)$, and hence $R\not\in\A(x)$, which is a contradiction. Thus, the cell $q$ is labeled with $W$.

By \ref{enu:D3}, the cell $q$ is the top cell of $\Asc(q)$ when it is created, and hence $q$ is above the strip $\Des(x)$. Let $u$ be the cell at the intersection of $R$ and $\Des(x)$. Consider the following cases.

Case 1. Suppose the node $q$ is in the middle subtree of $x$.   Since $\lambda(x)=E$, by \ref{enu:B2} the middle child of $x$ is labeled with $E$. Consider the cell circuit in $F$ consisting of the cells along the path $\mu(x,q)$, followed by the cells from $q$ to $u$ in $R$ and the cells from $u$ to $x$ in $\Des(x)$.
Traversing the cell circuit from $x$, we start in the $E$ direction. If we draw a closed curve connecting the centers of consecutive cells along this circuit, we observe that the curve goes above the strip $\Des(x)$ to reach $q$ either from the right side of $x$ or from the other way around. In the former case, the right point of $x$ will be enclosed within a bounded region of this curve. By Lemma \ref{lem:enclosure},  the cell $x$ is not a stem cell, a contradiction. In the latter case, there exists a node $v\in\mu(x,q)$ such that the cell $v$ is one of the leftmost stem cells of this circuit. In this case, the right point of $v$ will be enclosed within a bounded region of this curve. This implies that $v$ is not a stem cell, also a contradiction. Thus, the node $q$ is not in the middle subtree of $x$.

Case 2. Suppose the node $q$ is in the right subtree of $x$. By \ref{enu:B2}, the right child of $x$ is labeled with $S$. Traversing the above-mentioned cell circuit from $x$, we start in the $S$ direction. Similarly, the closed curve, connecting the centers of consecutive cells along the circuit, goes above the strip $\Des(x)$ either from the right side of $x$ or from the other way around. In either case, there exists a stem cell such that its right point will be enclosed within a bounded region of the curve, which results in a contradiction. Thus, the node $q$ is not in the right subtree of $x$. 

By the above two cases, the node $q$ is in the left subtree of $x$. The result follows.
\end{proof}

\begin{lem} \label{lem:disjoint}
Let $T\in\T_n$, and let $(F,Q)=\phi(T)\in\MF_n$.  Let $x\in T$ be a node with $\lambda(x)=E$. If $a$ is a descendant of $x$ in $T$ with $\lambda(a)=E$ then $\A(x)\cap\A(a)=\emptyset$ in $F$.
\end{lem}

\begin{proof}  Choose a chain $T_1\subset \cdots\subset T_n$ of subtrees of $T$. 
Suppose $a$ is a descendant of $x$ with $\lambda(a)=E$, say $a=T_h\setminus T_{h-1}$ for some $h$. 
Suppose $\A(x)\cap\A(a)\neq\emptyset$. Let $\ell$ be the least integer such that there exists a node $q=T_{\ell}\setminus T_{\ell-1}$ creating an ascending strip in $\A(x)\cap\A(a)$ for some $\ell>h$. By Lemma \ref{lem:unique-W}, the cell $q$ is the unique stem cell with label $W$ in $\Asc(q)$. By \ref{enu:D3}, the strip $\Asc(q)$ is inserted on the immediate left of an ascending step, say $R$. Consider the following possibilities for the strip $R$.

Case 1.  $R=\Asc(x)$. Since $a$ is a descendant of $x$ with $\lambda(a)=E$, by a similar argument in the proof of Lemma \ref{lem:unique-W}, we observe that the strip $\Des(a)$ is created by a node $p$, where $p$ is the root or a node with $\lambda(p)\in\{N,S\}$, and the set
 $\A(a)$ is empty when the cell $a$ is created. If $u$ is a non-stem cell added in $R$ afterword then $u$ is the intersection of $R$ and a descending strip created by a node $v=T_j\setminus T_{j-1}$ with label $S$ for some $j>h$. Then $u\in\Des(v)\neq\Des(p)$, and hence $R\not\in\A(a)$. Since $\Asc(q)$ is on the immediate left of $R$, it follows that $\Asc(q)\not\in\A(a)$, a contradiction.

Case 2.  $R=\Asc(a)$. Since $a$ is a descendant of $x$ with $\lambda(a)=E$, by Lemma \ref{lem:unique-stem-cell-in-strips}(ii), there is no stem cell with label $W$ in $R$.
By Lemma \ref{lem:unique-W}, we have $R\not\in\A(x)$. Since $\Asc(q)$ in on the immediate left of $R$,  it follows that $\Asc(q)\not\in \A(x)$, a contradiction.

Case 3.  $R\in\A(x)\cap\A(a)$. By Lemma \ref{lem:unique-W}, there exists an integer $i<\ell$ such that the node in $T_i\setminus T_{i-1}$ creates the strip $R$ in $\A(x)\cap\A(a)$, a contradiction.

Thus, $\A(x)\cap\A(a)=\emptyset$ and the result follows. 
\end{proof}

\smallskip
\begin{lem} \label{lem:No_E}
Let $T\in\T_n$, and let $(F,Q)=\phi(T)\in\MF_n$.  Let $x\in T$ be a node with $\lambda(x)=E$.
Suppose $R$ is an ascending strip in $\A(x)$, let $q$ be the stem cell with label $W$ in $R$. Then there is no node $a\in\mu(x,q)$ with $\lambda(a)=E$ and $a\neq x$ such that $\alpha(a)=\alpha(q)$ in $T$.
\end{lem}

\begin{proof} 
Suppose $R\in\A(x)$. By Lemma \ref{lem:unique-W}, the node $q$ is in the left subtree of $x$. Suppose the path $\mu(x,q)$ contains nodes other than $x$ with label $E$ at abscissa $\alpha(q)$. Let $a\in\mu(x,q)$ be the last node from $x$ with $\lambda(a)=E$ and $\alpha(a)=\alpha(q)$. By \ref{enu:B2}, the middle child of $a$ is also a node with label $E$ at abscissa $\alpha(q)$, and hence the node $q$ is not in the middle subtree of $a$. 
We claim that every node $u\in\mu(a,q)$ satisfies the condition $\alpha(u)\geq \alpha(a)$. 

Suppose there exists a node $u\in\mu(a,q)$ with $\alpha(u)<\alpha(a)=\alpha(q)$. Since $\lambda(q)=W$, by Lemma \ref{lem:v-labeling-rule}, $\alpha(q)$ is even.
By \ref{enu:B2}--\ref{enu:B5}, the node $q$ is either the middle child of a node with label $W$ or the right child of a node with label $N$ or $S$. Moreover, by \ref{enu:B2}--\ref{enu:B5}, if a node is a left child then it is labeled with $E$ or $N$.
Then there exists a node $w\in\mu(u,q)$ with $\lambda(w)=N$ and $\alpha(w)=\alpha(q)+1$, which is odd. It follows that there exists a node $v\in\mu(u,w)$ with $\lambda(v)=E$ and $\alpha(v)=\alpha(q)$. This is against the condition that $a\in\mu(x,q)$ is the last node from $x$ with $\lambda(a)=E$ and $\alpha(a)=\alpha(q)$. Thus, every node $u\in\mu(a,q)$ satisfies the condition $\alpha(u)\geq \alpha(a)$, and the node $q$ is in the left subtree of $a$. By Proposition \ref{pro:path}(i), the strip $\Asc(q)$ intersects $\Des(a)$ at a cell between $a$ and $P_a$. Thus, $R=\Asc(q)\in\A(a)$. By Lemma \ref{lem:disjoint}, $\A(a)\cap\A(x)=\emptyset$. Then $R\not\in\A(x)$, a contradiction.  Thus, the path $\mu(x,q)$ contains no node other than $x$ with label $E$ at abscissa $\alpha(q)$. The result follows. 
\end{proof}

\smallskip
\begin{pro} \label{pro:alpha(q)=alpha(x)}
Let $T\in\T_n$, and let $(F,Q)=\phi(T)\in\MF_n$. Let $x\in T$ be a node with $\lambda(x)=E$. Suppose $R$ is an ascending strip in $\A(x)$, let $q$ be the stem cell with label $W$ in $R$. Then $\alpha(q)=\alpha(x)$ in $T$ and every node $u\in\mu(x,q)$ satisfies the condition $\alpha(u)\ge \alpha(x)$.
\end{pro}

\begin{proof} 
Suppose $R\in\A(x)$. By Lemma \ref{lem:unique-W}, the node $q$ is in the left subtree of $x$. Since $\lambda(x)=E$, by \ref{enu:B2} the left child of $x$ is labeled with $N$. Since $\lambda(q)=W$, by \ref{enu:D3} the cell $q$ is above the strip $\Des(x)$ since it is the top cell of $\Asc(q)$ when it is created. Let $u$ be the cell at the intersection of $R$ and $\Des(x)$. We claim that $\alpha(q)=\alpha(x)$. 

Suppose $\alpha(q)<\alpha(x)$. Let $b\in\mu(x,q)$ be the first node from $x$ with $\alpha(b)=\alpha(x)-1$, and let $a$ be the parent of $b$. Then $\alpha(a)=\alpha(x)$ and every node $u\in\mu(x,a)$ satisfies the condition $\alpha(u)\ge\alpha(x)$. By Proposition \ref{pro:path}(i), we have $\lambda(a)=W$, $\Asc(a)$ intersects $\Des(x)$ at a cell between $x$ and $P_x$, and $a$ is the first stem cell in $\Asc(a)$ above $\Des(x)$. Since $b$ is the right child of $a$, by \ref{enu:B4} we have $\lambda(b)=S$. By \ref{enu:D4}, the cell $b$ is added on the $S$ side of $a$, so that the strip $\Des(b)$ is inserted below $\Des(x)$. 

Traversing the path $\mu(x,q)$ from the cell $x$ in the fighting fish $F$, we start in the $N$ direction. If we draw a closed curve connecting the centers of consecutive cells along the path $\mu(x,q)$, followed by the cells from $q$ to $u$ in $R$ and the cells from $u$ to $x$ in $\Des(x)$, we observe that the curve goes below the strip $\Des(x)$ when visiting the cells from $a$ to $b$. Then the curve goes above the strip $\Des(x)$ to reach $q$ either from the right side of $x$ or from the other way around. By the argument in the proof of Case 1 of Lemma \ref{lem:unique-W}, in either case there exists a stem cell such that its right point  will be enclosed within a bounded region of this curve, which results in a contradiction. Thus, $\alpha(q)\geq\alpha(x)$.

Suppose $\alpha(q)>\alpha(x)$. Since $\lambda(x)=E$ and $\lambda(q)=W$, by Lemma \ref{lem:v-labeling-rule}, $\alpha(x)$ and $\alpha(q)$ are even. Let $\alpha(q)=\alpha(x)+2\ell$ for some $\ell\ge 1$. Since $\lambda(q)=W$, by \ref{enu:B2}--\ref{enu:B5} the node $q$ is either the middle child or the right child of a node, and hence there is a node $w\in\mu(x,q)$ with $\lambda(w)=N$ and $\alpha(w)=\alpha(x)+2\ell+1$. It follows that there exists a node $a\in\mu(x,w)$ with $\lambda(a)=E$ such that $\alpha(a)=\alpha(x)+2\ell=\alpha(q)$. By Lemma \ref{lem:No_E}, we have $R\not\in\A(x)$, a contradiction. Thus, we have $\alpha(q)=\alpha(x)$.

Suppose there exists a node $u\in\mu(x,q)$ with $\alpha(u)<\alpha(x)$. Since $\alpha(q)=\alpha(x)$ and $q$ is either the middle child or the right child of a node, there exists a node $w\in\mu(u,q)$ with $\lambda(w)=N$ and $\alpha(w)=\alpha(q)+1$. It follows that there is a node $a\in\mu(u,m)$ with $\lambda(a)=E$ and $\alpha(a)=\alpha(q)$. By Lemma \ref{lem:No_E}, we have $R\not\in\A(x)$, a contradiction. Thus, every node $u\in\mu(x,q)$ satisfies the condition $\alpha(u)\geq\alpha(x)$.  The result follows.
\end{proof}

\subsection{A proof of Theorem \ref{thm:LTT-to-fish-bijection}}

\begin{lem} \label{lem:stem-cells-in-jaw}
For any fighting fish $F\in\F_n$, let $x_1,x_2,\dots,x_d$ be the stem cells in the jaw of $F$ for some $d$, where $x_1$ is the topmost stem cell, and set $\lambda(x_1)=E$.  By the bijection $\phi$ in Theorem \ref{thm:TT-to-marked-fish}, let $T=\phi^{-1}(F,\Des(x_1))\in\T_n$.  Then the following properties hold.
\begin{enumerate}
\item The tree $T$ is rooted at $x_1$, and the nodes $x_1,x_2,\dots,x_d$ form a path such that $x_i$ is the middle child of $x_{i-1}$, for $2\le i\le d$. Hence $\alpha(x_i)=0$ and $\lambda(x_i)=E$ for all $i$.
\item The tree $T$ can be decomposed into the ternary trees $L_{x_1}, L_{x_2},\dots, L_{x_d}$.
\end{enumerate}
\end{lem}

\begin{proof}
Since $x_1,x_2,\dots,x_d$ are the stem cells in the jaw of $F$, $x_i$ is on the $E$ side of $x_{i-1}$ for $2\le i\le d$, and there is no stem cell on the $S$ side of each $x_i$. Since $\lambda(x_1)=E$, by \ref{enu:E2} we have $\lambda(x_i)=E$ for all $i$.
By Lemma \ref{lem:the-map-phi}, $T=\phi^{-1}(F,\Des(x_1))$ is a ternary tree rooted at $x_1$.
By \ref{enu:B2}, the node $x_i$ is the middle child of $x_{i-1}$ for $2\le i\le d$, and $x_i$ has no right child for all $i$.  Thus, $\alpha(x_1)=\cdots=\alpha(x_d)=0$ and $T$ can be decomposed into the subtrees $L_{x_1},\dots, L_{x_d}$. The results follow.
\end{proof}

Using the bijection $\phi:\T_n\rightarrow\MF_n$ in Theorem \ref{thm:TT-to-marked-fish}, we now describe the bijection $\psi:\LT_n\rightarrow\F_n$ in Theorem \ref{thm:LTT-to-fish-bijection}.

\smallskip
\noindent
\emph{Proof of Theorem \ref{thm:LTT-to-fish-bijection}.}
Given a left ternary tree $T\in\LT_n$,  let $x_1, x_2,\dots, x_d$ be the nodes of $T$ for some $d\geq 1$ such that $x_1$ is the root and $x_i$ is the middle child of $x_{i-1}$ for $2\le i\le d$. Hence $\alpha(x_i)=0$ for all $i$. Set $\lambda(x_1)=E$.   By \ref{enu:B2}, we have $\lambda(x_i)=E$ for all $i$. By the map $\phi$ in Theorem \ref{thm:TT-to-marked-fish}, let $(F,Q)=\phi(T)\in\MF_n$. By Lemma \ref{lem:the-map-phi}, the cell $x_1$ is the topmost stem cell in the descending strip $Q$ and the cell $x_i$ is on the $E$ side of $x_{i-1}$, for $2\le i\le d$. We claim that the strip $Q$ is the jaw of $F$.

Suppose $Q$ is not the jaw of $F$. Let $Q'$ be the descending strip immediately below $Q$. Since the node $x_1$ is the root of $T$, we construct the cells $x_1, x_2,\dots,x_d$ before the strip $Q'$ during the construction of $F$. Then the strip $Q'$ is created by a node, say $b$, with label $S$ using \ref{enu:D4}.  Consider the following cases of the ascending strip containing the cell $b$.

Case 1. $\Asc(b)=\Asc(x_i)$ for some $i$. Then the cell $b$ is on the $S$ side of $x_i$. Since $\lambda(x_i)=E$ and $\lambda(b)=S$, by \ref{enu:B2} the node $b$ is the right child of $x_i$ in $T$, and hence $\alpha(b)=-1$, which is against the condition that $T$ is a left ternary tree.

Case 2. $\Asc(b)\in\A(x_i)$ for some $i$. By Lemma \ref{lem:unique-W}, there is a unique stem cell, say $a$, with label $W$ in $\Asc(b)$, and the node $a$ is in the left subtree of $x_i$ in $T$.  By Proposition \ref{pro:alpha(q)=alpha(x)}, we have $\alpha(a)=\alpha(x_i)=0$ in $T$ and every node $u\in\mu(x_i,a)$ satisfies the condition $\alpha(u)\geq\alpha(x_i)$. By Proposition \ref{pro:path}(i), $a$ is the  first stem cell in $\Asc(a)=\Asc(b)$ above $\Des(x_i)=Q$. Hence the node $b$ is adjacent to $a$. Since $\lambda(a)=W$ and $\lambda(b)=S$, by \ref{enu:B3} the node $b$ is the right child of $a$, and hence $\alpha(b)=-1$, which is against the same condition.

Thus, the strip $Q$ is the jaw of $F$.  We define the map $\psi$ by setting $\psi:T\mapsto F$.

On the other hand, given a fighting fish $F\in\F_n$, let $Q$ be the jaw of $F$. By Theorem \ref{thm:TT-to-marked-fish}, let $T=\phi^{-1}(F,Q)\in\T_n$. We claim that $T$ is a left ternary tree.

Let $x_1,x_2,\dots,x_d$ be the stem cells in the jaw $Q$ for some $d$, where $x_1$ is the topmost stem cell, and set $\lambda(x_1)=E$. 
By Lemma \ref{lem:stem-cells-in-jaw}, we have $\alpha(x_i)=0$ and $\lambda(x_i)=E$ for all $i$, and $T$ can be decomposed into the subtrees $L_{x_1},\dots,L_{x_d}$. Suppose $T$ contains nodes with negative abscissas, say in the subtree $L_{x_i}$ for some $i$. Let $b$ be one of the nodes in $L_{x_i}$ with abscissa $-1$ such that every node $u\in\mu(x_i,b)$, $u\neq b$, satisfies the condition $\alpha(u)\ge 0$. Let $a$ be the parent of $b$. Then $\alpha(a)=0$, and $b$ is the right child of $a$.  By Proposition \ref{pro:path}(i), we have $\lambda(a)=W$,  $\Asc(a)$ intersects $\Des(x_i)$ at a cell between $x_i$ and $x_{i-1}$, and the cell $a$ is the first stem cell in $\Asc(a)$ above $\Des(x_i)$ in $F$. Since $\lambda(a)=W$, by \ref{enu:B4} we have $\lambda(b)=S$. By \ref{enu:D4}, the cell $b$ is added on the $S$ side of $a$, so that the strip $\Des(b)$ is inserted below $\Des(x_i)$. Note that $\Des(x_i)$ is the jaw $Q$ of $F$.  That there is a descending strip below the jaw of $F$ is a contradiction. Thus, all nodes of $T$ have nonnegative abscissas and hence $T$ is a left ternary tree. We define the map $\psi^{-1}$ by setting $\psi^{-1}:F\mapsto T$.

We claim that the number of nodes with abscissa 0 in $T$ equals the number of cells in the jaw of $F$. Decompose the tree $T$ into the subtrees $L_{x_1},\dots,L_{x_d}$. For each $i$, suppose $L_{x_i}$ contains $\ell_i$ nodes at abscissa 0.
Let $a_{i,1}, a_{i,2}, \dots, a_{i,\ell_i}$ be the nodes at abscissa 0 in $L_{x_i}$, where $x_i=a_{i,1}$. Note that for each $a_{i,j}$ ($2\le j\le \ell_i$), every node $u\in\mu(x_i, a_{i,j})$ satisfies the condition $\alpha(u)\ge 0$. By Proposition \ref{pro:path}(i), we have $\lambda(a_{i,j})=W$, and $\Asc(a_{i,j})$ intersects $\Des(x_i)$ in a cell between $x_i$ and $x_{i-1}$. By Lemma \ref{lem:unique-stem-cell-in-strips}(ii), these strips $\Asc(a_{i,2}),\dots,\Asc(a_{i,\ell_i})$ are distinct.
Thus, there are at least $\ell_i-1$ cells between $x_i$ and $x_{i-1}$ in the jaw of $F$. 

Suppose $R$ is an ascending strip in $\A(x_i)$. By Lemmas \ref{lem:unique-W}, there is a unique stem cell, say $q$, with label $W$ in $R$, and the node $q$ is in the left subtree of $x_i$. By Proposition \ref{pro:alpha(q)=alpha(x)}, we have $\alpha(q)=\alpha(x_i)$ and every node $u\in\mu(x_i,q)$ satisfies the condition $\alpha(u)\ge\alpha(x_i)$. Thus, the node $q$ is one of the nodes at abscissa 0 in $L_{x_i}$. Thus, there are exactly $\ell_i-1$ cells between $x_i$ and $x_{i-1}$ in the jaw of $F$. The announced bijection $\psi:\LT_n\rightarrow\F_n$ is established.
\qed

\medskip
For the case $n=4$, we compile a list of one-to-one correspondence between the members of $\F_4$ and $\LT_4$ in Figure \ref{fig:mapping}, where the ternary trees with the root $a$ are left ternary trees.

\begin{cor} The number of left ternary trees having $i+1$ nodes with even abscissas and $j$ nodes with odd abscissas is 
\begin{equation}
\frac{1}{(i+1)(j+1)}\binom{2i+j+1}{j}\binom{i+2j+1}{i}.
\end{equation}
\end{cor}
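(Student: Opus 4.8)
\emph{Proof proposal.} The plan is to reduce the statement to a refined count of ternary trees and then to extract that count by Lagrange inversion. First I would invoke Theorem~\ref{thm:LTT-to-fish-bijection}: under the bijection $\phi\colon\LT_n\to\F_n$ a left ternary tree with $i+1$ nodes of even abscissa and $j$ nodes of odd abscissa is sent to a fighting fish of size $n:=i+j+1$ with $i+1$ ascending strips and $j+1$ descending strips, i.e.\ to an element of the set $\F_{n,j}$ from the proof of Theorem~\ref{thm:2-to-(n+1)}. That proof established the identity $(\ell+1)\,|\F_{n,\ell}|=|\T_{n,\ell}|$, where $\T_{n,\ell}$ denotes the ternary trees with $n$ nodes of which $\ell$ have odd abscissa. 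Hence the number sought equals $\tfrac{1}{j+1}\,|\T_{n,j}|$, and it remains to prove
\[
|\T_{n,j}|=\frac{1}{i+1}\binom{2i+j+1}{j}\binom{i+2j+1}{i},\qquad n=i+j+1 .
\]

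For this I would set up a bivariate generating function by classifying the nodes of a ternary tree by the parity of their abscissa. Decomposing at the root, a nonempty tree whose root has even abscissa has a left subtree that is empty or rooted at an odd-abscissa node, a middle subtree empty or rooted at an even-abscissa node, and a right subtree empty or rooted at an odd-abscissa node; the parities are interchanged when the root has odd abscissa. Letting $E=E(x,y)$ and $O=O(x,y)$ count nonempty ternary trees rooted at an even- (resp.\ odd-) abscissa node, with $x$ marking nodes of even abscissa and $y$ marking nodes of odd abscissa, this gives
\[
E=x(1+E)(1+O)^2,\qquad O=y(1+O)(1+E)^2 ,
\]
and since every tree of $\T_n$ has root abscissa $0$ we have $|\T_{n,j}|=[x^{i+1}y^{j}]E$.

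I would then apply the multivariate Lagrange--Good inversion formula to the system $E=x\phi_1$, $O=y\phi_2$ with $\phi_1=(1+E)(1+O)^2$ and $\phi_2=(1+O)(1+E)^2$. A direct computation of the Jacobian factor gives
\[
\det\!\begin{pmatrix}1-\dfrac{E}{1+E} & -\dfrac{2O}{1+O}\\[10pt] -\dfrac{2E}{1+E} & 1-\dfrac{O}{1+O}\end{pmatrix}=\frac{1-4EO}{(1+E)(1+O)},
\]
so that, after collecting exponents of $(1+E)$ and $(1+O)$,
\[
[x^{i+1}y^{j}]E=[E^{i}O^{j}]\,(1+E)^{i+2j}(1+O)^{2i+j+1}(1-4EO)
=\binom{i+2j}{i}\binom{2i+j+1}{j}-4\binom{i+2j}{i-1}\binom{2i+j+1}{j-1}.
\]
Finally I would simplify: using $\binom{i+2j}{i-1}=\tfrac{i}{2j+1}\binom{i+2j}{i}$ and $\binom{2i+j+1}{j-1}=\tfrac{j}{2i+2}\binom{2i+j+1}{j}$, the bracketed factor becomes $1-\tfrac{2ij}{(2j+1)(i+1)}=\tfrac{i+2j+1}{(2j+1)(i+1)}$, and since $\tfrac{i+2j+1}{2j+1}\binom{i+2j}{i}=\binom{i+2j+1}{i}$ this collapses to $\tfrac{1}{i+1}\binom{2i+j+1}{j}\binom{i+2j+1}{i}$. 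Combining with the factor $\tfrac{1}{j+1}$ from the first step yields the corollary.

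I expect the main obstacle to lie in the Lagrange-inversion step—correctly forming $\phi_1,\phi_2$, computing the Jacobian, and tracking the exponents of $(1+E)$ and $(1+O)$—rather than in the closing binomial manipulation, which is just a chain of ratios of adjacent binomial coefficients. A more combinatorial alternative, more in the spirit of the paper, would be to prove the displayed formula for $|\T_{n,j}|$ by a cycle-lemma/Raney-type argument, separating the ``even skeleton'' of the tree (accounting for the factor $\tfrac{1}{i+1}\binom{i+2j+1}{i}$) from the insertions of odd-abscissa nodes (accounting for $\binom{2i+j+1}{j}$); this is likely more delicate and less suited to a corollary, so I would present the generating-function proof as the main line of argument.
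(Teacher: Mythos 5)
Your proposal is correct, but it proves the corollary by a genuinely different route than the paper does. The paper's argument is a one-line transfer: by Theorem \ref{thm:LTT-to-fish-bijection} the left ternary trees with $i+1$ even-abscissa and $j$ odd-abscissa nodes are equinumerous with the fighting fish of size $n=i+j+1$ having $i+1$ ascending and $j+1$ descending strips, and the closed formula for the latter is simply quoted from Duchi--Guerrini--Rinaldi--Schaeffer \cite[Theorem 2]{DGRS-B}. You use the same first step (the bijection $\phi$ identifies this class of trees with $\F_{n,j}$), but then you stay inside the paper: the refined identity $(\ell+1)|\F_{n,\ell}|=|\T_{n,\ell}|$ from the proof of Theorem \ref{thm:2-to-(n+1)} reduces the count to $\tfrac{1}{j+1}|\T_{n,j}|$, and you evaluate $|\T_{n,j}|$ by a bivariate Lagrange--Good inversion for the system $E=x(1+E)(1+O)^2$, $O=y(1+O)(1+E)^2$. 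I checked the details: the Jacobian factor $\frac{1-4EO}{(1+E)(1+O)}$, the coefficient extraction $[E^iO^j](1+E)^{i+2j}(1+O)^{2i+j+1}(1-4EO)$, and the binomial simplification to $\tfrac{1}{i+1}\binom{2i+j+1}{j}\binom{i+2j+1}{i}$ are all correct, and the identification of the restricted bijection with $\F_{n,j}$ is justified since $\phi$ maps each $\LT_{n,j}$ into $\F_{n,j}$ and both families partition $\LT_n$ and $\F_n$. What your approach buys is a self-contained proof that does not rely on the external enumeration of fighting fish by strip statistics (in effect you re-derive that result of \cite{DGRS-B} from the paper's own bijections plus generating functions); what the paper's citation buys is brevity, which is why it is presented as a corollary.
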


This result follows from the bijection in Theorem \ref{thm:LTT-to-fish-bijection} and a result by Duchi, Guerrini, Rinaldi and Schaeffer \cite[Theorem 2]{DGRS-B}. The same formula has been obtained in the enumeration of non-separable planar maps \cite{Brown-Tutte} and two-stack sortable permutations \cite{GW-1996, JS-1998}.

\section{Additional enumerative results} \label{sec:additional}
In this section, we present some enumerative results based on the bijections in Theorem \ref{thm:TT-to-marked-fish} and Theorem \ref{thm:LTT-to-fish-bijection}.

\subsection{Enumeration of symmetric fighting fish}
A fighting fish $F$ is \emph{symmetric} if $F^{\cc}=F$, where the axis of symmetry is the horizontal line passing through the center of the head. Let $\SF_n$ denote the set of symmetric fighting fish of size $n$.
Note that a symmetric fighting fish contains an odd number of stem cells since the terminal cell on the axis is either a tail or a branch cell. Thus, $\SF_n$ is empty if $n$ is even. We define
\begin{equation}
\OSF_{2n+1} :=\{F\in\SF_{2n+1}\mid \mbox{$F$ contains an odd number of tails} \}.
\end{equation}

\begin{thm} \label{thm:SFF_enumeration} For all $n\ge 0$, the following results hold.
\begin{enumerate}
\item There is a bijection between $\SF_{2n+1}$ and $(\T\times\T)_n$.
\item There is a bijection between $\OSF_{2n+1}$ and $\T_n$.
\end{enumerate}
\end{thm}

\begin{proof} (i) We describe a map $\rho:\SF_{2n+1} \rightarrow (\T\times\T)_n$ as follows. Given a fighting fish $F\in\SF_{2n+1}$, let $t$ be the terminal cell on the axis. Then we have $(F^{\cc},t^{\cc})=(F,t)$. By the bijection in Theorem \ref{thm:fighting-fish-marked-stem-cell}, we obtain $\varphi^{-1}:(F,t)\mapsto (t;T_1,T_2,T_3,T_4)=(t;T_2,T_1,T_4,T_3)\in \QT_{2n+1}$. It follows that $T_1=T_2$ and $T_3=T_4$. Consider the following two cases. 

Case 1. $F$ contains an odd number of tails. Then the cell $t$ is a tail (see Figure \ref{fig:symmetric-FF}(i)), and hence there exists no cell on the $N$ (resp. $E$) side of $t$. It follows that the subtrees $T_1$ and $T_2$ are empty. Thus, we define the map $\rho:(F,t)\mapsto (T_1,T_3)\in (\T\times\T)_n$, where $T_1$ is empty.

Case 2. $F$ contains an even number of tails. Then the  cell $t$ is a branch cell (see Figure \ref{fig:symmetric-FF}(ii)), and hence there exists a stem cell on the $N$ (resp. $E)$ side of $t$ in the strip $\Asc(t)$ (resp. $\Des(t)$). It follows that $T_1$ and $T_2$ are non-empty. Thus, we defined the map $\rho:(F,t)\mapsto (T_1,T_3)\in (\T\times\T)_n$, where $T_1$ is non-empty.

On the other hand, given a pair $(T_1,T_2)\in(\T\times\T)_n$, we associate $(T_1,T_2)$ with an ordered tree $(t;T_1,T_1,T_2,T_2)\in\QT_{2n+1}$. By  Theorem \ref{thm:fighting-fish-marked-stem-cell}, we obtain $\varphi:(t;T_1,T_1,T_2,T_2)\mapsto (F,t)=(F^{\cc},t^{\cc})\in\CF_{2n+1}$. It follows that $F$ is symmetric and the root cell $t$ is the terminal cell on the axis. We defined the map $\rho^{-1}: (T_1,T_2)\mapsto F\in\SF_{2n+1}$. The assertion (i) follows.

(ii) Following the above mentioned Case 1, the bijection between $\OSF_{2n+1}$ and $\T_n$ can be obtained by restricting the map $\rho$ to the set $\OSF_{2n+1}$. The assertion (ii) follows.
\end{proof}

\begin{figure}[ht]
\begin{center}
\psfrag{t}[][][1]{$t$}
\psfrag{w}[][][1]{$w$}
\psfrag{w'}[][][1]{$w'$}
\includegraphics[width=4.2in]{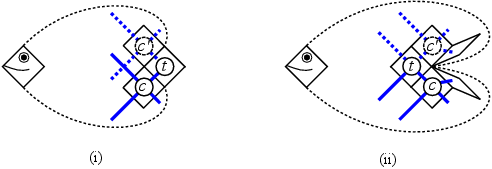}
\end{center}
\caption{\small The terminal cell on the axis of a symmetric fighting fish.} \label{fig:symmetric-FF}
\end{figure}

By (\ref{eqn:ternary_tree_number}), (\ref{eqn:ordered-pair-TT}) and Theorem \ref{thm:SFF_enumeration}, we have the following results. 

\begin{cor} \label{cor:symmetric-FF-enumeration}
For all $n\ge 0$, the following results hold.
\begin{enumerate}
\item The number of symmetric fighting fish of size $2n+1$ is $\frac{1}{n+1}\binom{3n+1}{n}$.
\item The number of symmetric fighting fish of size $2n+1$ containing an odd number of tails is $\frac{1}{2n+1}\binom{3n}{n}$.
\item The number of symmetric fighting fish of size $2n+1$ containing even number of tails is $\frac{1}{2n+1}\binom{3n}{n+1}$.
\end{enumerate}
\end{cor}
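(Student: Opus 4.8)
The final statement to prove is Corollary \ref{cor:symmetric-FF-enumeration}, which has three parts, each following from a combination of previously established bijections and known enumerative formulas.

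The plan is to deduce each of the three enumeration formulas directly from the bijections in Theorem \ref{thm:SFF_enumeration} together with the classical counts \eqref{eqn:ternary_tree_number} and \eqref{eqn:ordered-pair-TT}. First, for part (i), I would invoke the bijection $\SF_{2n+1}\simeq(\T\times\T)_n$ from Theorem \ref{thm:SFF_enumeration}(ii); combined with the known sequence \eqref{eqn:ordered-pair-TT} giving $|(\T\times\T)_n|=\frac{1}{n+1}\binom{3n+1}{n}$, this immediately yields $|\SF_{2n+1}|=\frac{1}{n+1}\binom{3n+1}{n}$. Second, for part (ii), I would use the bijection from Theorem \ref{thm:SFF_enumeration}(i), which maps symmetric fighting fish of size $2n+1$ with an odd number of tails onto $\T_n$; by \eqref{eqn:ternary_tree_number} this set has cardinality $\frac{1}{2n+1}\binom{3n}{n}$.

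Third, for part (iii), I would subtract: the symmetric fighting fish of size $2n+1$ with an even number of tails number $|\SF_{2n+1}|$ minus the count in part (ii), i.e. $\frac{1}{n+1}\binom{3n+1}{n}-\frac{1}{2n+1}\binom{3n}{n}$. The only genuine work here is a short binomial-coefficient identity verifying that this difference equals $\frac{1}{2n+1}\binom{3n}{n+1}$. One clean way is to use the bijection in \eqref{eqn:even-tails}, which identifies this set with $\B_n$, the ordered pairs $(T_1,T_2)\in(\T\times\T)_n$ with $T_2$ nonempty; then $|\B_n|=|(\T\times\T)_n|-|\T_n|$ (the subtracted term being the pairs $(T,\emptyset)$), and one reduces to the same algebraic identity. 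To check it, write $\frac{1}{n+1}\binom{3n+1}{n}=\frac{1}{n+1}\cdot\frac{(3n+1)!}{n!(2n+1)!}$ and $\frac{1}{2n+1}\binom{3n}{n}=\frac{(3n)!}{n!(2n+1)!}$, factor out $\frac{(3n)!}{n!(2n+1)!}$, and simplify $\frac{3n+1}{n+1}-1=\frac{2n}{n+1}$, so the difference is $\frac{(3n)!}{n!(2n+1)!}\cdot\frac{2n}{n+1}=\frac{(3n)!\,2n}{(n+1)!(2n+1)!}$; on the other hand $\frac{1}{2n+1}\binom{3n}{n+1}=\frac{(3n)!}{(n+1)!(2n-1)!(2n+1)}=\frac{(3n)!}{(n+1)!(2n+1)!}\cdot 2n$, which matches.

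The main (and only) obstacle is bookkeeping in this last identity, which is entirely routine; there is no conceptual difficulty, since all the combinatorial content has been carried out in Theorem \ref{thm:SFF_enumeration} and the cited references for \eqref{eqn:ternary_tree_number} and \eqref{eqn:ordered-pair-TT}. I would present the proof as three short paragraphs mirroring the three parts, citing the relevant bijections and formulas and closing with the one-line binomial computation for part (iii).
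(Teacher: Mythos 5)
Your proposal is correct and follows essentially the same route as the paper, which deduces all three counts directly from the bijections of Theorem \ref{thm:SFF_enumeration} together with the formulas (\ref{eqn:ternary_tree_number}) and (\ref{eqn:ordered-pair-TT}); for part (iii) the paper implicitly relies on the bijection (\ref{eqn:even-tails}) with $\B_n$, just as you note, and your binomial verification of $\frac{1}{n+1}\binom{3n+1}{n}-\frac{1}{2n+1}\binom{3n}{n}=\frac{1}{2n+1}\binom{3n}{n+1}$ is accurate.
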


\subsection{Enumeration of left ternary trees}
We give a combinatorial proof of the fact that the number of left ternary trees with $n$ nodes is $\frac{2}{n+1}$ times the number of ternary trees with $n$ nodes. Define
\begin{align*}
\LT_n^{\odd} &= \{(T,u)\mid u\in T\in\LT_n\mbox{ and $\alpha(u)$ is odd}\}; \\
\LT_n^{\even} &=\{(T,u)\mid u\in T\in\LT_n\mbox{ and $\alpha(u)$ is even}\}.
\end{align*}
Using the bijections in Theorem \ref{thm:TT-to-marked-fish} and Theorem \ref{thm:LTT-to-fish-bijection}, we present a simplified proof of a result by Jacquard and Schaeffer (cf. \cite[Theorem 2]{JS-1998}). For $0\le \ell\le n-1$, define
\begin{equation}
\LT_{n,\ell} = \{T\in\LT_n\mid\mbox{$T$ contains $\ell$ nodes with odd abscissas}\}.
\end{equation}

\begin{thm} \label{thm:enumeration}
For $n\ge 1$, there exist bijections
\begin{align}
\LT_n^{\odd} & \rightarrow \T_n\setminus \LT_n,  \label{eqn:LT_odd}\\
\LT_n^{\even}& \rightarrow \T_n. \label{eqn:LT_even}
\end{align}
Hence we have 
\begin{equation*}
|\LT_n| =\frac{2}{n+1} |\T_n|.
\end{equation*}
\end{thm}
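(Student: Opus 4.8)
The plan is to establish the two bijections \eqref{eqn:LT_odd} and \eqref{eqn:LT_even} and then deduce the counting identity by a simple arithmetic argument. The key tool is the bijection $\phi:\T_n\rightarrow\MF_n$ of Theorem \ref{thm:TT-to-marked-fish} together with its specialization $\phi:\LT_n\rightarrow\F_n$ of Theorem \ref{thm:LTT-to-fish-bijection}. First I would observe that, under $\phi:\LT_n\to\F_n$, a marked node $u$ of a left ternary tree $T$ corresponds (via the labeled tree $T_F$ built on stem cells) to a marked stem cell of $F=\phi(T)$; and by Lemma \ref{lem:v-labeling-rule}(iv) the abscissa of $u$ is even exactly when $\lambda(u)\in\{E,W\}$, i.e.\ when the stem cell $u$ lies in an ascending strip of $F$ (a stem cell with label $E$ is the distinguished cell of an ascending strip created by (C2), and one with label $W$ by (C4)), and odd exactly when $u$ lies in a descending strip. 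So $\LT_n^{\even}$ is in bijection with pairs $(F,Q)$ where $F\in\F_n$ and $Q$ is an ascending strip of $F$, and $\LT_n^{\odd}$ with pairs $(F,Q)$ where $Q$ is a descending strip of $F$; one must be slightly careful about whether "marked node" is matched to "marked strip" or "marked stem cell" — but since each descending (resp.\ ascending) strip of $F$ contains exactly one stem cell carrying the label $N$ or $E$ (resp.\ $E$ or $W$) that is named for it in the construction, the correspondence between marked stem cells and marked strips needs to be tracked carefully, and the genuine count of nodes of a given abscissa parity matches the count of strips of the corresponding type by Lemma \ref{lem:parameters}(i).

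Next, to prove \eqref{eqn:LT_even}, I would use \eqref{eqn:Q_is_ascending} from the proof of Theorem \ref{thm:2-to-(n+1)}: there the map $(F,Q)\mapsto \phi^{-1}(F^{\cc},Q^{\cc})$ was shown to biject $\{(F,Q): F\in\F_{n,\ell},\ Q\text{ ascending}\}$ onto $\T_{n,n-\ell-1}$. Summing over $\ell$ from $0$ to $n-1$ gives a bijection from $\{(F,Q): F\in\F_n,\ Q\text{ ascending}\}$ onto $\bigcup_{\ell}\T_{n,n-\ell-1}=\T_n$. Composing with the bijection from the previous paragraph yields $\LT_n^{\even}\simeq\T_n$. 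For \eqref{eqn:LT_odd}, I would similarly use \eqref{eqn:Q_is_descending}: the map $(F,Q)\mapsto\phi^{-1}(F,Q)$ bijects $\{(F,Q):F\in\F_n,\ Q\text{ descending}\}$ onto $\T_n$. But $F$ ranges over $\F_n\simeq\LT_n$ and one of its descending strips is already "used" by the jaw: under $\phi:\LT_n\to\F_n$, the image $(F,Q_0)$ has $Q_0$ equal to the jaw, which corresponds to the root (abscissa $0$, even) rather than to a node of odd abscissa. So marking a descending strip that is \emph{the jaw} recovers the trivial pair (the tree $T$ itself via the root), while marking a non-jaw descending strip — equivalently an interior node of odd abscissa — lands, under $\phi^{-1}$, in $\T_n\setminus\LT_n$ (the conjugation/reflection moves the marked strip below the jaw, forcing a negative abscissa as in the proof of Lemma \ref{lem:T_F-is-LTT}). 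This gives $\LT_n^{\odd}\simeq\T_n\setminus\LT_n$.

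Finally I would deduce the enumeration. Counting pairs: $|\LT_n^{\even}|+|\LT_n^{\odd}|=\sum_{T\in\LT_n}|T|=n\,|\LT_n|$, since every node of every $T\in\LT_n$ is counted once. On the other hand, by \eqref{eqn:LT_even} and \eqref{eqn:LT_odd}, $|\LT_n^{\even}|+|\LT_n^{\odd}|=|\T_n|+(|\T_n|-|\LT_n|)=2|\T_n|-|\LT_n|$. Equating the two expressions gives $n\,|\LT_n|=2|\T_n|-|\LT_n|$, i.e.\ $(n+1)|\LT_n|=2|\T_n|$, which is the claimed identity.

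\textbf{Main obstacle.} The delicate point is the first paragraph: correctly matching "marked node of the tree" with "marked strip of the fish" through the composite $\phi:\LT_n\to\F_n$ and the stem-cell tree $T_F$, and in particular verifying that marking a \emph{non-jaw} descending strip corresponds to a node of \emph{odd} abscissa strictly inside $T$ (never the root), and that under the inverse map of Theorem \ref{thm:2-to-(n+1)} such a marking produces a tree \emph{outside} $\LT_n$. This requires invoking Lemma \ref{lem:parameters} to know that descending strips of $F$ are in bijection with $\{$root$\}\cup\{$odd-abscissa nodes$\}$, and Proposition \ref{pro:path}/Lemma \ref{lem:T_F-is-LTT} to see that "pushing" a marked interior descending strip below the jaw via conjugation forces a negative abscissa. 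Once these identifications are pinned down, the rest is the bookkeeping already carried out in the proof of Theorem \ref{thm:2-to-(n+1)} plus the one-line double-counting above.
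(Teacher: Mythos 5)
Your proposal is correct and follows essentially the same route as the paper: identify a marked even-abscissa (resp.\ odd-abscissa) node of a left ternary tree with the marked ascending (resp.\ non-jaw descending) strip of the corresponding fighting fish containing that stem cell, apply the marked-strip bijection of Theorem \ref{thm:TT-to-marked-fish} (with conjugation $F\mapsto F^{\cc}$ in the ascending case, exactly as in \eqref{eqn:Q_is_ascending}) to get \eqref{eqn:LT_even} and \eqref{eqn:LT_odd}, and conclude by the double count $n|\LT_n|=2|\T_n|-|\LT_n|$, which is the paper's (implicit) final step. The only slips are cosmetic: in the odd case no conjugation is involved --- the tree $\phi^{-1}(F,Q)$ for a non-jaw descending strip $Q$ fails to be left simply because $\phi$ sends every left ternary tree to a jaw-marked fish, so by injectivity a non-jaw marking cannot return a left tree --- and your first paragraph's matching of $\LT_n^{\odd}$ with \emph{all} descending strips should read \emph{non-jaw} descending strips, as you yourself correct later.
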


\begin{proof}
For $0\le \ell\le n-1$, we shall establish a bijection
\begin{equation} \label{eqn:LT^odd-bijection}
\{(T,u)\mid u\in T\in\LT_{n,\ell}\mbox{ and $\alpha(u)$ is odd}\}\rightarrow\T_{n,\ell}\setminus\LT_{n,\ell}.
\end{equation}

Given a left ternary tree $T\in\LT_{n,\ell}$, let $t$ be the root of $T$ with $\lambda(t)=E$, and let $u_1,\dots,u_{\ell}$ be the nodes with odd abscissas of $T$.  By Lemma \ref{lem:v-labeling-rule}, we have $\lambda(u_i)\in\{N,S\}$ for each $i$. By the map $\psi$ in Theorem \ref{thm:LTT-to-fish-bijection}, let $F=\psi(T)\in\F_n$.  Then $F$ contains $\ell+1$ descending strips. By Lemma \ref{lem:unique-stem-cell-in-strips}(i), $\Des(u_1),\dots,\Des(u_{\ell})$ are the $\ell$ descending strips other than the jaw of $F$. By the proof of Theorem \ref{thm:LTT-to-fish-bijection}, the strip $\Des(t)$ is the jaw of $F$.
By the bijection in Theorem \ref{thm:TT-to-marked-fish}, let $T_j=\phi^{-1}(F,\Des(u_j))$ for $j=1,\dots, \ell$. Then $T_j\in\T_n\setminus\LT_n$ containing $\ell$ nodes with odd abscissas. Then we obtain the map $(T,u_j)\mapsto T_j\in\T_{n,\ell}\setminus\LT_{n,\ell}$, for $j=1,\dots, \ell$.

On the other hand, given a ternary tree $T\in\T_{n,\ell}\setminus\LT_{n,\ell}$, let $(F,Q)=\phi(T)\in\MF_n$. By Theorem \ref{thm:TT-to-marked-fish}, the fighting fish $F$ contains $\ell+1$ descending strips. By Lemma \ref{lem:the-map-phi}, the root cell of $F$ is the topmost stem cell of the strip $Q$. Since $T$ is not a left ternary tree, by the proof of Theorem \ref{thm:LTT-to-fish-bijection}, $Q$ is not the jaw of $F$. Let $t$ be the topmost stem cell in the jaw of $F$, and let $T_F=\phi^{-1}(F,\Des(t))\in\T_n$. Then $T_F$ is a left ternary tree containing $\ell$ nodes with odd abscissas. By Lemma \ref{lem:unique-stem-cell-in-strips}(i), there is a unique stem cell, say $u$, with label $N$ or $S$ in the strip $Q$ of $F$. By Lemma \ref{lem:v-labeling-rule}, the node $u$ is at odd abscissa in $T_F$. Thus, we have the inverse map $T\mapsto (T_F,u)$. The bijection (\ref{eqn:LT^odd-bijection}) is established. Note that 
\begin{align*}
\LT_n^{\odd} &= \bigcup_{\ell=0}^{n-1} 
\{(T,u)\mid u\in T\in\LT_{n,\ell}\mbox{ and $\alpha(u)$ is odd}\}, \\
\T_n\setminus \LT_n &= \bigcup_{\ell=0}^{n-1} \big(\T_{n,\ell} \setminus\LT_{n,\ell}\big).
\end{align*}
The bijection in (\ref{eqn:LT_odd}) is established.

For $0\le \ell\le n-1$, we shall establish a bijection
\begin{equation} \label{eqn:LT^even-bijection}
\{(T,u)\mid u\in T\in\LT_{n,\ell}\mbox{ and $\alpha(u)$ is even}\}\rightarrow\T_{n,n-\ell-1}.
\end{equation}

Given a left ternary tree $T\in\LT_{n,\ell}$, let $v_1,v_2,\dots,v_{n-\ell}$ be the nodes with even abscissas of $T$, where $v_1$ is the root. By Lemma \ref{lem:v-labeling-rule}, we have $\lambda(v_i)\in\{E,W\}$ for each $i$. By the map $\psi$ in Theorem \ref{thm:LTT-to-fish-bijection}, let $F=\psi(T)\in\F_n$. Then $F$ contains $n-\ell$ ascending strips.
For $1\le j\le n-\ell$, let $R_j=\Asc(v_j)$.
By Lemma \ref{lem:unique-stem-cell-in-strips}(ii), $R_1,\dots,R_{n-\ell}$ are the $n-\ell$ ascending strips of $F$. Consider the conjugation of $F$. Note that $R^{\cc}_1,\dots,R^{\cc}_{n-\ell}$ become the $n-\ell$ descending strips of $F^{\cc}$.  By Theorem \ref{thm:TT-to-marked-fish}, let $T_j=\phi^{-1}(F^{\cc},R^{\cc}_j)$ for $1\le j\le n-\ell$. Then each $T_j$ contains $n-\ell-1$ nodes with odd abscissas. Then we obtain the map $(T,v_j)\mapsto T_j\in\T_{n,n-\ell-1}$, for $j=1,\dots, n-\ell$.

On the other hand, given a ternary tree $T\in\T_{n,n-\ell-1}$, let $(F,Q)=\phi(T)\in\MF_n$. By Theorem \ref{thm:TT-to-marked-fish}, the fighting fish $F$ contains $n-\ell$ descending strips. In the conjugation  $F^{\cc}$, the strip $Q^{\cc}$ becomes one of the $n-\ell$ ascending strips.  By Theorem \ref{thm:LTT-to-fish-bijection}, let $T_{F^{\cc}}=\psi^{-1}(F^{\cc})\in\LT_n$. Then the left ternary tree $T_{F^{\cc}}$ contains $n-\ell$ nodes with even abscissas and hence $\ell$ nodes with odd abscissas. By Lemma \ref{lem:unique-stem-cell-in-strips}(ii), there is a unique stem cell, say $u$, with label $E$ or $W$ in the ascending strip $Q^{\cc}$ of $F^{\cc}$. By Lemma \ref{lem:v-labeling-rule}, the node $u$ is at even abscissa in $T_{F^{\cc}}$. Thus, we have the inverse map $T\mapsto (T_{F^{\cc}},u)$. The bijection (\ref{eqn:LT^even-bijection}) is established. Note that
\begin{align*}
\LT_n^{\even} &= \bigcup_{\ell=0}^{n-1} 
\{(T,u)\mid u\in T\in\LT_{n,\ell}\mbox{ and $\alpha(u)$ is even}\}, \\
\T_n &= \bigcup_{\ell=0}^{n-1} \T_{n,n-\ell-1}.
\end{align*}
The bijection in (\ref{eqn:LT_even}) is established. The result follows.
\end{proof}

\section{Concluding Remarks} \label{sec:conclusion}
From a fresh perspective on the construction of fighting fish, we establish a bijection $\phi$ between the set $\T_n$ of ternary trees with $n$ nodes and the set of fighting fish of size $n$ with a marked descending strip. Using this result, we obtain a combinatorial enumeration of the set $\F_n$ of fighting fish of size $n$ by establishing an $(n+1)$-to-2 bijection between $\F_n$ and $\T_n$.

When the map $\phi$ is restricted to left ternary trees, we obtain a direct bijection $\psi$ between left ternary trees  and fighting fish. As remarked earlier, this bijection is not isomorphic to the composition of the bijection $\Phi$ between non-separable rooted planar maps and fighting fish in \cite{DH-2022-A} and the recursive bijection $\Psi$ between left ternary trees and non-separable rooted planar maps in \cite{DLDRP} or \cite{JS-1998}. This may be due to the convention on our choice of root cell.
For example, the left ternary tree in Figure \ref{fig:composition}(i) is carried by the map $\Psi$ to the non-separable rooted planar map in Figure \ref{fig:composition}(ii), which is carried by the map $\Phi$ to the fighting fish in Figure \ref{fig:composition}(iii). As shown in Figure \ref{fig:my-map}, this fighting fish is mapped to a different left ternary tree under our bijection $\phi$.

\begin{figure}[ht]
\begin{center}
\psfrag{Psi}[][][1]{$\Psi$}
\psfrag{Phi}[][][1]{$\Phi$}
\includegraphics[width=3.6in]{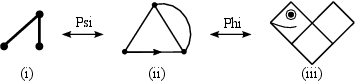}
\end{center}
\caption{\small (i) Left ternary tree, (ii) non-separable rooted planar map, and (iii) fighting fish.} \label{fig:composition}
\end{figure}

\begin{figure}[ht]
\begin{center}
\psfrag{a}[][][1]{$a$}
\psfrag{b}[][][1]{$b$}
\psfrag{c}[][][1]{$c$}
\psfrag{phi}[][][1]{$\psi$}
\includegraphics[width=2in]{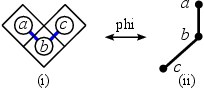}
\end{center}
\caption{\small A fighting fish and its corresponding  left ternary tree.} \label{fig:my-map}
\end{figure}

However, there is a refined conjecture that involves more statistics regarding the correspondence between left ternary trees  and fighting fish \cite{DGRS-B}. The \emph{fin} of a fighting fish is the path starting from the left point of its head, following its border counterclockwise, and ending at the right point of the first tail the path reaches. On the other hand, the \emph{core} of a ternary tree is the largest subtree including the root vertex and consisting only of left and middle edges. So far the following conjecture has still not been resolved.

\begin{con}[Duchi--Guerrini--Rinaldi--Schaeffer \cite{DGRS-B}]
The number of fighting fish with size $n$, fin length $k$, having $h$ tails, $i$ ascending strips and $j$ descending strips is equal to the number of left ternary trees with $n$ nodes, core size $k$, having $h$ right branches, $i+1$ non-root nodes with even abscissas and $j$ nodes with odd abscissas.
\end{con}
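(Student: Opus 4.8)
The plan is to prove the conjecture by showing that the direct bijection $\phi:\LT_n\rightarrow\F_n$ of Theorem~\ref{thm:LTT-to-fish-bijection} simultaneously transports all four statistics. Two of the required equidistributions are essentially already in hand. The size $n$ is preserved by construction, and by Lemma~\ref{lem:parameters} (specialized to left ternary trees, where the root sits at abscissa $0$) the number of descending, respectively ascending, strips of $\phi(T)$ is governed by the number of odd-abscissa, respectively even-abscissa, nodes of $T$. A necessary first step is therefore a careful reconciliation of conventions: the statistics of the conjecture are normalized differently from ours (note the shift $i+1$ versus $i$ and the exclusion of the root in the ``non-root'' count), so I would verify that, after accounting for the distinguished boundary objects---the root at abscissa $0$, the jaw as a marked descending strip, and the virtual parent cell $P_x$---the indices $i$ and $j$ of the conjecture align exactly with the strip counts produced by $\phi$. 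I expect this translation between the normalizations of \cite{DGRS-B} and Lemma~\ref{lem:parameters} to be routine but not automatic, and it must be settled before the refinement can be stated cleanly.

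The next task is to augment $\phi$ with the correspondence between the $h$ tails of $\phi(T)$ and the $h$ right branches of $T$. Here I would lean on the label-propagation rules of Lemma~\ref{lem:v-labeling-rule}: a node is a right child precisely when its label lies in $\{S,W\}$, so a right branch, understood as a maximal run of right edges, is a maximal run of $S/W$-labels, which under Algorithm~C corresponds exactly to a block of strip insertions (C3)/(C4). Combining the identity $\branch(F)=\tail(F)-1$ of Lemma~\ref{lem:number-stem-cells}(i) with the marked-tail bijection of Theorem~\ref{thm:pair-ternary-trees}---which already records a tail of $F$ as the root of a ternary tree carrying a middle and a right subtree---I would match each tail of $\phi(T)$ with the terminal node of a right branch of $T$, and conversely, with the parity statements of Lemma~\ref{lem:v-labeling-rule}(iv) guaranteeing compatibility with the strip counts.

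The heart of the argument, and the step I expect to be the main obstacle, is the identity between fin length and core size. Using Lemma~\ref{lem:v-labeling-rule}, the core of $T$---the maximal subtree through the root built from left and middle edges only---is exactly the set of nodes reachable from the root along such edges, all of which carry labels in $\{E,N\}$; geometrically this is a staircase of $E$- and $N$-labeled stem cells hanging off the head of $\phi(T)$. I would prove, by an induction on the construction in Algorithm~C that parallels Lemma~\ref{lem:jaw} (which identifies the abscissa-$0$ nodes with the jaw cells), that the counterclockwise border walk defining the fin sweeps out exactly the outer border of this staircase and halts at the first tail it reaches, namely the first leaf of the core. The real difficulty is that this bridges a purely geometric quantity, a border path, with a purely tree-theoretic one: the strip-insertion operations (C3)/(C4) can locally deform the border of the fish, so I would have to show that no such insertion ever disturbs the initial border segment already traced by the fin, which keeps the fin pinned to the core throughout the growth of $\phi(T)$. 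Establishing this stability, and with it the cell-by-cell correspondence between the fin and the border of the core, is where the substantive work lies; once it is in place, the four correspondences assemble into the claimed joint equidistribution.
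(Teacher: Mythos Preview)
The statement you are attempting to prove is presented in the paper as an \emph{open conjecture}; the sentence immediately preceding it reads ``So far the following conjecture has still not been resolved.'' There is therefore no proof in the paper to compare your proposal against, and the paper does not claim that the bijection $\phi$ of Theorem~\ref{thm:LTT-to-fish-bijection} settles it.

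More to the point, your plan has a concrete obstruction at the step you label ``routine.'' By Theorem~\ref{thm:LTT-to-fish-bijection}, a left ternary tree with $\ell$ odd-abscissa nodes is sent by $\phi$ to a fighting fish with $\ell+1$ descending strips, and a tree with $n-\ell$ even-abscissa nodes to a fish with $n-\ell$ ascending strips. The conjecture, however, asks that $j$ descending strips correspond to $j$ (not $j-1$) odd-abscissa nodes, and $i$ ascending strips to $i+1$ non-root even-abscissa nodes. These shifts are not a matter of normalization that can be absorbed by relabeling: under $\phi$ the pair (descending, ascending) $=(j,i)$ pulls back to (odd, even) $=(j-1,i)$, whereas the conjecture demands (odd, non-root even) $=(j,\,i+1)$, i.e.\ (odd, even) $=(j,\,i+2)$. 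So $\phi$ does not transport these two statistics in the way the conjecture requires, and the ``careful reconciliation of conventions'' you anticipate cannot succeed---the discrepancy is genuine, not notational. This is presumably why the authors, who certainly considered the question, still list the conjecture as open after constructing~$\phi$.

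Your discussion of tails versus right branches and of fin length versus core size is a reasonable sketch of what one would \emph{like} a witnessing bijection to do, but since $\phi$ already fails on the strip/parity statistics, there is no reason to expect it to behave correctly on the remaining ones either. A proof of the conjecture would require a different bijection (or a substantial modification of $\phi$), and in particular a new idea for the fin/core correspondence that you correctly identify as the crux.
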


The following $q$-polynomial $F_n(q)$ is a natural $q$-analogue of the number of fighting fish of size $n$
\begin{equation*}
F_n(q):=\frac{[2]}{[n+1][2n+1]}{3n\brack n},
\end{equation*}
where $[k]=1+q+\cdots+q^{k-1}$, $[k]!=[1][2]\cdots [k]$, and ${n\brack k}=[n]!/([k]![n-k]!)$.
The initial $q$-polynomials are
\begin{align*}
F_1(q) &=1, \\
F_2(q) &=1+q^3,\\
F_3(q) &=1+q^3+q^4+q^6+q^7+q^{10}, \\
F_4(q) &=1+q^3+q^4+q^5+q^6+q^7+2q^8+2q^9+q^{10}+q^{11}+2q^{12}+2q^{13}+q^{14}+q^{15}+q^{16} \\
 &\qquad +q^{17}+q^{18}+q^{21}.
\end{align*}
These $q$-polynomials specialize at $q=-1$ to the aerated sequence 1, 0, 2, 0, 7, 0, 30, 0, 143, 0, 728, $\dots$, 
in which the terms alternate between 0 and the number of symmetric fighting fish. This $(-1)$-evaluation phenomenon is known in terms of synchronized intervals of Tamari lattices by Fang, Fusy and Nadeau \cite[Section 6.4]{FFN-2025}, where symmetric fighting fish correspond to synchronised self-dual intervals. It remains interesting to find a natural statistic directly on fighting fish  that describes the above distribution and $(-1)$-evaluation phenomenon.

\begin{figure}[ht]
\begin{center}
\psfrag{a}[][][0.75]{$a$}
\psfrag{b}[][][0.75]{$b$}
\psfrag{c}[][][0.75]{$c$}
\psfrag{d}[][][0.75]{$d$}
\includegraphics[width=6in]{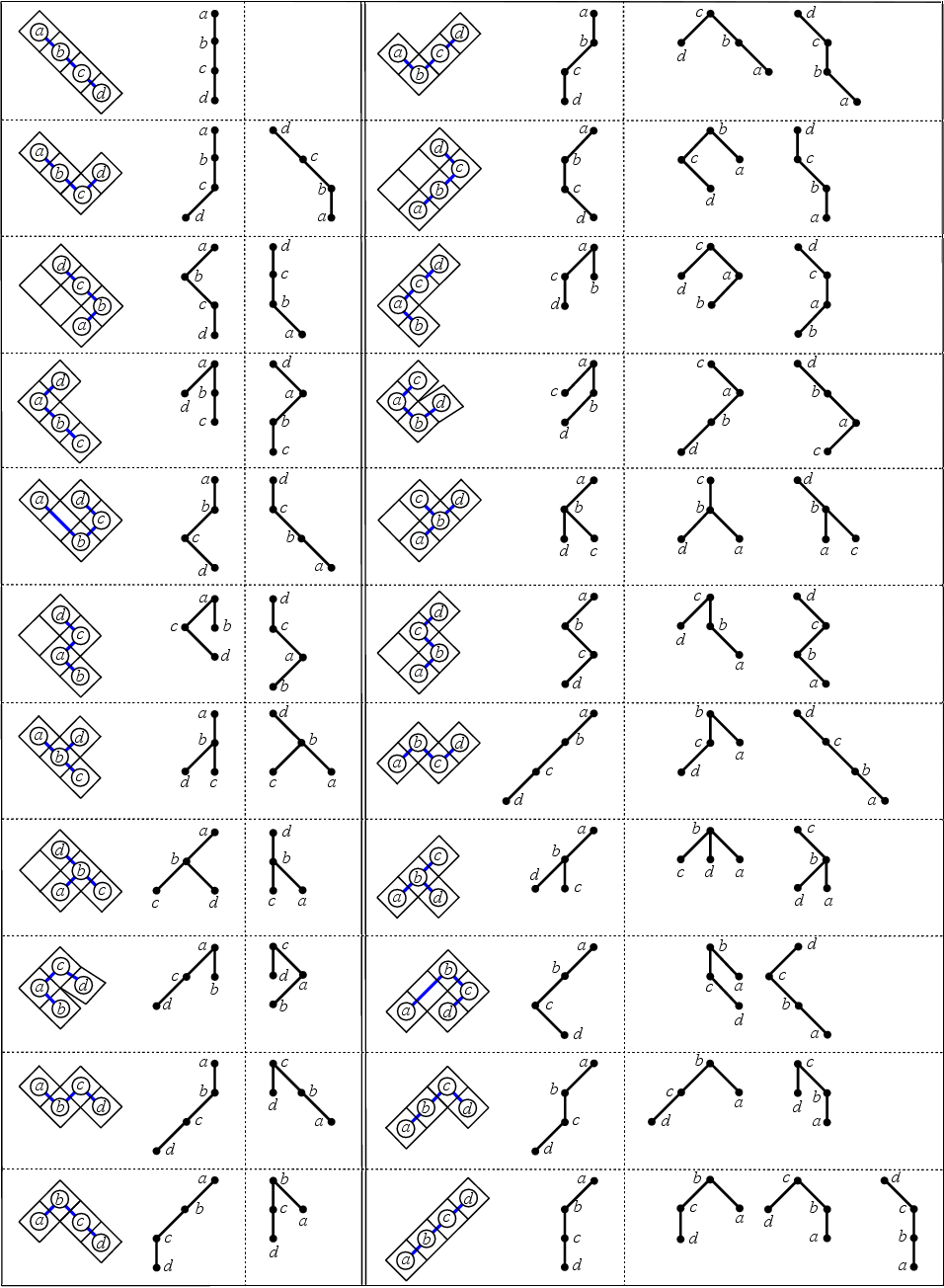}
\end{center}
\caption{\small The mapping of the set $\T_4$ (resp. $\LT_4$) onto $\F_4$.} \label{fig:mapping}
\end{figure}

\end{document}